\newcommand{\bcen}{\begin{center}}     \newcommand{\ecen}{\end{center}}
\newcommand{\bay}{\begin{array}}      \newcommand{\eay}{\end{array}}
\newcommand{\beq}{\begin{eqnarray*}}      \newcommand{\eeq}{\end{eqnarray*}}
\def\diag{\mathrm{diag}}
\def\dim{\mathrm{dim}}
\def\End{\mathrm{End}}
\def\Hom{\mathrm{Hom}}
\def\id{\mathrm{id}}
\def\inj{\mathrm{inj}}
\def\mod{\mathrm{mod}}
\def\Mod{\mathrm{Mod}}
\def\op{\mathrm{op}}
\def\pd{\mathrm{pd}}
\def\per{\mathrm{per}}
\def\proj{\mathrm{proj}}
\def\Proj{\mathrm{Proj}}
\def\RHom{\mathrm{RHom}}
\def\tr{\mathrm{tr}}
\def\tria{\mathrm{tria}}
\begin{document}

\newtheorem{theorem}{Theorem}
\newtheorem{proposition}{Proposition}
\newtheorem{lemma}{Lemma}
\newtheorem{corollary}{Corollary}
\newtheorem{remark}{Remark}
\newtheorem{example}{Example}
\newtheorem{definition}{Definition}
\newtheorem*{conjecture}{Conjecture}
\newtheorem{question}{Question}

\title{\large\bf Reducing homological conjectures by $n$-recollements}

\author{\large Yang Han and Yongyun Qin}

\date{\footnotesize KLMM, ISS, AMSS,
Chinese Academy of Sciences, Beijing 100190, P.R. China.\\ E-mail:
hany@iss.ac.cn ; qinyongyun2006@126.com}

\maketitle

\begin{abstract} $n$-recollements of triangulated categories and
$n$-derived-simple algebras are introduced. The relations between
the $n$-recollements of derived categories of algebras and the
Cartan determinants, homological smoothness and Gorensteinness of
algebras respectively are clarified. As applications, the Cartan
determinant conjecture is reduced to $1$-derived-simple algebras,
and the Gorenstein symmetry conjecture is reduced to
$2$-derived-simple algebras.
\end{abstract}

\medskip

{\footnotesize {\bf Mathematics Subject Classification (2010)}:
16G10; 18E30}

\medskip

{\footnotesize {\bf Keywords} : $n$-recollement; $n$-derived-simple
algebra; Cartan determinant; homologically smooth algebra;
Gorenstein algebra. }

\section{\large Introduction}

\indent\indent Throughout $k$ is a fixed field and all algebras are
associative $k$-algebras with identity unless stated otherwise.
Recollements of triangulated categories were introduced by
Beilinson, Bernstein and Deligne \cite{BBD82}, and play an important
role in algebraic geometry and representation theory. Here, we focus
on the recollements of derived categories of algebras which are the
generalization of derived equivalences and provide a useful
reduction technique for some homological properties such as the
finiteness of global dimension \cite{Wie91,Koe91,AKLY13}, the
finiteness of finitistic dimension \cite{Hap93,CX14} and the
finiteness of Hochschild dimension \cite{Han14}, some homological
invariants such as $K$-theory
\cite{TT90,Yao92,Nee04,Sch06,CX12,AKLY13}, Hochschild homology and
cyclic homology \cite{Kel98} and Hochschild cohomology \cite{Han14},
and some homological conjectures such as the finitistic dimension
conjecture \cite{Hap93,CX14} and the Hochschild homology dimension
conjecture \cite{Han06}.

In a recollement, two functors in the first layer always preserve
compactness, i.e., send compact objects to compact ones,
but other functors are not the case in general.
If a recollement is {\it perfect}, i.e.,
two functors in the second layer also preserve compactness,
then the Hochschild homologies, cyclic
homologies and $K$-groups of the middle algebra are the direct sum
of those of outer two algebras respectively
\cite{Kel98,CX12,AKLY13}. Moreover, in this situation, the relations
between recollements and the finitistic dimensions of algebras can
be displayed very completely \cite{CX14}. In order to clarify the
relations between recollements and the homological smoothness and
Gorensteinness of algebras respectively, we need even more layers of
functors preserving compactness, which leads to the
concept of $n$-recollement of triangulated categories inspired by
that of ladder \cite{BGS88}, and further $n$-derived-simple algebra.
In terms of $n$-recollements, the relations between recollements and
the Cartan determinants, homological smoothness and Gorensteinness
of algebras respectively are expressed as follows.

\medskip

{\bf Theorem I.} {\it Let $A$, $B$ and $C$ be finite dimensional
algebras, and $\mathcal{D}(\Mod A)$ admit an $n$-recollement
relative to $\mathcal{D}(\Mod B)$ and $\mathcal{D}(\Mod C)$ with $n
\geq 2$. Then $\det C(A)= \det C(B) \cdot \det C(C).$}

\medskip

{\bf Theorem II.} {\it Let $A$, $B$ and $C$ be algebras and
$\mathcal{D}(\Mod A)$ admit an $n$-recollement relative to
$\mathcal{D}(\Mod B)$ and $\mathcal{D}(\Mod C)$.

{\rm (1) $n=1$:} if $A$ is homologically smooth then so is $B$;

{\rm (2) $n=2$:} if $A$ is homologically smooth then so are $B$ and
$C$;

{\rm (3) $n \geq 3$:}  $A$ is homologically smooth if and only if so
are $B$ and $C$. }

\medskip

{\bf Theorem III.} {\it Let $A$, $B$ and $C$ be finite dimensional
algebras, and \linebreak $\mathcal{D}(\Mod A)$ admit an $n$-recollement
relative to $\mathcal{D}(\Mod B)$ and $\mathcal{D}(\Mod C)$.

{\rm (1) $n=3$:} if $A$ is Gorenstein then so are $B$ and $C$;

{\rm (2) $n \geq 4$:} $A$ is Gorenstein if and only if so are $B$
and $C$. }

\medskip

As applications of Theorem I and Theorem III, we will show that the
Cartan determinant conjecture and the Gorenstein symmetry conjecture
can be reduced to 1-derived-simple algebras and 2-derived-simple
algebras respectively.

The paper is organized as follows: In section 2, we will introduce
the concepts of $n$-recollement of triangulated categories and
$n$-derived-simple algebra, and provide some typical examples and
existence criteria of $n$-recollements of derived categories of
algebras. In section 3, Theorem I is obtained and the Cartan
determinant conjecture is reduced to 1-derived-simple algebras. In
section 4, we will prove Theorem II. In section 5, Theorem III is
shown and the Gorenstein symmetry conjecture is reduced to
2-derived-simple algebras.

\section{\large $n$-recollements and $n$-derived-simple
algebras}

\indent\indent In this section, we will introduce the concepts of
$n$-recollement of triangulated categories and $n$-derived-simple
algebra, and provide some examples and existence criteria of the
$n$-recollements of derived categories of algebras. As we will see,
the language of $n$-recollements is very convenient for us to
observe the relations between recollements and certain homological
properties, especially the Gorensteinness of algebras.

\subsection{$n$-recollements of triangulated categories}

\begin{definition}{\rm (Beilinson-Bernstein-Deligne \cite{BBD82})
Let $\mathcal{T}_1$, $\mathcal{T}$ and $\mathcal{T}_2$ be
triangulated categories. A {\it recollement} of $\mathcal{T}$
relative to $\mathcal{T}_1$ and $\mathcal{T}_2$ is given by
$$\xymatrix@!=4pc{ \mathcal{T}_1 \ar[r]^{i_*=i_!} & \mathcal{T} \ar@<-3ex>[l]_{i^*}
\ar@<+3ex>[l]_{i^!} \ar[r]^{j^!=j^*} & \mathcal{T}_2
\ar@<-3ex>[l]_{j_!} \ar@<+3ex>[l]_{j_*}}$$ such that

(R1) $(i^*,i_*), (i_!,i^!), (j_!,j^!)$ and $(j^*,j_*)$ are adjoint
pairs of triangle functors;

(R2) $i_*$, $j_!$ and $j_*$ are full embeddings;

(R3) $j^!i_*=0$ (and thus also $i^!j_*=0$ and $i^*j_!=0$);

(R4) for each $X \in \mathcal {T}$, there are triangles

$$\begin{array}{l} j_!j^!X \rightarrow X  \rightarrow i_*i^*X  \rightarrow
\\ i_!i^!X \rightarrow X  \rightarrow j_*j^*X  \rightarrow
\end{array}$$ where the arrows to and from $X$ are the counits and the
units of the adjoint pairs respectively. }\end{definition}

\begin{definition}{\rm
Let $\mathcal{T}_1$, $\mathcal{T}$ and $\mathcal{T}_2$ be
triangulated categories, and $n$ a positive integer. An {\it
$n$-recollement} of $\mathcal{T}$ relative to $\mathcal{T}_1$ and
$\mathcal{T}_2$ is given by $n+2$ layers of triangle functors
$$\xymatrix@!=4pc{ \mathcal{T}_1 \ar@<+1ex>[r] \ar@<-3ex>[r]_\vdots & \mathcal{T}
\ar@<+1ex>[r]\ar@<-3ex>[r]_\vdots \ar@<-3ex>[l] \ar@<+1ex>[l] &
\mathcal{T}_2 \ar@<-3ex>[l] \ar@<+1ex>[l]}$$ such that every
consecutive three layers form a recollement. }\end{definition}

Obviously, a 1-recollement is nothing but a recollement. Moreover,
if $\mathcal{T}$ admits an $n$-recollement relative to
$\mathcal{T}_1$ and $\mathcal{T}_2$, then it must admit an
$m$-recollement relative to $\mathcal{T}_1$ and $\mathcal{T}_2$ for
all $1 \leq m \leq n$ and a $p$-recollement relative to $\mathcal{T}_2$ and
$\mathcal{T}_1$ for all $1 \leq p \leq n-1$.

\begin{remark}{\rm Let $\mathcal{T}$ be a skeletally small $k$-linear triangulated
category with finite dimensional Hom-sets and split idempotents. If
$\mathcal{T}$ has a Serre functor and admits a recollement relative
to $\mathcal{T}_1$ and $\mathcal{T}_2$ then it admits an
$n$-recollement relative to $\mathcal{T}_1$ and $\mathcal{T}_2$
(resp. $\mathcal{T}_2$ and $\mathcal{T}_1$) for all $n \in
\mathbb{Z}^+$ by \cite[Theorem 7]{Jor10}.

}\end{remark}

\subsection{$n$-recollements of derived categories of algebras}

\indent\indent Let $A$ be an algebra. Denote by $\Mod A$ the
category of right $A$-modules, and by $\mod A$, $\Proj A$, $\proj A$
and $\inj A$ its full subcategories consisting of all finitely
generated modules, projective modules, finitely generated projective
modules and finitely generated injective modules, respectively.  For
$* \in \{{\rm nothing}, -, +, b \}$ and $\mathcal{A} = \Mod A$ or
any above subcategory of $\Mod A$, denote by $K^*(\mathcal{A})$
(resp. $\mathcal{D}^*(\mathcal{A})$) the homotopy category (resp.
derived category) of cochain complexes of objects in $\mathcal{A}$
satisfying the corresponding boundedness condition. Up to
isomorphism, the objects in $K^{b}(\proj A)$ are precisely all the
compact objects in $\mathcal{D}(\Mod A)$. For convenience, we do not
distinguish $K^{b}(\proj A)$ from the {\it perfect derived category}
$\mathcal{D}_{\per}(A)$ of $A$, i.e., the full triangulated
subcategory of $\mathcal{D} A$ consisting of all compact objects,
which will not cause any confusion. Moreover, we also do not
distinguish $K^b(\inj A), \mathcal{D}^b(\Mod A), \mathcal{D}^b(\mod
A), \mathcal{D}^-(\Mod A)$ and $\mathcal{D}^+(\Mod A)$ from their
essential images under the canonical full embeddings into
$\mathcal{D}(\Mod A)$. Usually, we just write $\mathcal{D} A$
instead of $ \mathcal{D}(\Mod A)$.

In this paper, we focus on the $n$-recollements of derived
categories of algebras, i.e., all three triangulated categories in
an $n$-recollement are the derived categories of algebras. Clearly,
in the $n$-recollement, the upper $n$ layers of functors have right
adjoints preserving direct sums, thus they preserve compactness.

Now we provide some typical examples of $n$-recollements.

\begin{example} \label{Example-n-recollement}
{\rm (1) Stratifying ideals \cite{CPS96}. Let $A$ be an algebra, and
$e$ an idempotent of $A$ such that $AeA$ is a {\it stratifying
ideal}, i.e., $Ae \otimes^L_{eAe} eA \cong AeA$ canonically. Then
$\mathcal{D}A$ admits a 1-recollement relative to
$\mathcal{D}(A/AeA)$ and $\mathcal{D}(eAe)$.

(2) Triangular matrix algebras \cite[Example 3.4]{AKLY13}. Let $B$
and $C$ be algebras, $M$ a $C$-$B$-bimodule, and $A =
\left[\begin{array}{cc} B & 0 \\ M & C \end{array}\right]$. Then
$\mathcal{D}A$ admits a 2-recollement relative to $\mathcal{D}B$ and
$\mathcal{D}C$. Furthermore, $\mathcal{D}A$ admits a 3-recollement
relative to $\mathcal{D}C$ and $\mathcal{D}B$ if $_CM\in K^b(\proj
C^{\op})$, and $\mathcal{D}A$ admits a 3-recollement relative to
$\mathcal{D}B$ and $\mathcal{D}C$ if $M_B\in K^b(\proj B)$. What is
more, $\mathcal{D}A$ admits a 4-recollement relative to
$\mathcal{D}C$ and $\mathcal{D}B$ if $_CM \in K^b(\proj C^{\op})$
and $M_B \in K^b(\proj B)$. Note that the algebras $A,B$ and $C$
here need not be finite dimensional.

(3) Let $A$ be a finite dimensional algebra of finite global dimension and
$\mathcal{D}A$ admit a recollement relative to $\mathcal{D}B$ and
$\mathcal{D}C$. Then this recollement can be extended to an
$n$-recollement for all $n \in \mathbb{Z}^+$ (Ref. \cite[Proposition
3.3]{AKLY13}).

(4) A derived equivalence induces a {\it trivial} $n$-recollement,
i.e., an $n$-recollement whose left term or right term is zero, for
all $n \in \mathbb{Z}^+$. }\end{example}

Usually we pay more attention to the $n$-recollements of derived
categories of finite dimensional algebras. In this situation, we
have some useful existence criteria of $n$-recollements.

\begin{lemma} \label{Lemma-adjoint}
Let $A$ and $B$ be finite dimensional algebras, and the triangle
functor $F : \mathcal{D}A \rightarrow \mathcal{D}B$ left adjoint to
$G : \mathcal{D}B \rightarrow \mathcal{D}A$. Then:

{\rm (1)} $F$ restricts to $K^b(\proj)$ if and only if $G$ restricts
to $\mathcal{D}^b(\mod)$;

{\rm (2)}  $F$ restricts to $\mathcal{D}^b(\mod)$ if and only if $G$
restricts to $K^b(\inj)$.
\end{lemma}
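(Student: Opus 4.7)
The plan is to deduce both equivalences from the adjunction isomorphism $\Hom_{\mathcal{D}B}(FX,Y)\cong\Hom_{\mathcal{D}A}(X,GY)$ evaluated at carefully chosen test objects. Three standard ingredients over a finite-dimensional algebra will carry all the work: representability of cohomology by the regular module, $H^i(Z)\cong\Hom_{\mathcal{D}A}(A,Z[i])$; Nakayama duality, $\Hom_{\mathcal{D}B}(Z,DB[i])\cong DH^{-i}(Z)$ where $D:=\Hom_k(-,k)$; and the simple-detection criteria for objects of $\mathcal{D}^b(\mod B)$, namely that such an $X$ lies in $K^b(\proj B)$ iff $\Hom_{\mathcal{D}B}(X,S[i])=0$ for $|i|\gg 0$ for every simple $B$-module $S$, with the dual statement characterising $K^b(\inj B)$. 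In addition, the Hom group from a bounded complex of finitely generated projectives (resp. of finitely generated modules) into a bounded complex of finitely generated modules (resp. of finitely generated injectives) is visibly finite-dimensional and bounded in cohomological degree.

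For part (1), the forward direction is a one-line computation: for $Y\in\mathcal{D}^b(\mod B)$ the groups $H^i(GY)\cong\Hom_{\mathcal{D}B}(FA,Y[i])$ are finite-dimensional and bounded in $i$ since $FA\in K^b(\proj B)$, so $GY\in\mathcal{D}^b(\mod A)$. For the converse, because $K^b(\proj A)=\thick(A)$ and $F$ is triangulated, it suffices to show $FA\in K^b(\proj B)$. I will probe the hypothesis with two choices of $Y$. Taking $Y=DB$, Nakayama duality gives $DH^{-i}(FA)\cong H^i(G(DB))$, and the hypothesis $G(DB)\in\mathcal{D}^b(\mod A)$ then forces $FA\in\mathcal{D}^b(\mod B)$. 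Taking $Y=B/\rad B$, the computation $\Hom_{\mathcal{D}B}(FA,S[i])\cong H^i(G(B/\rad B))$ at each simple summand shows $\Hom_{\mathcal{D}B}(FA,S[i])=0$ for $|i|\gg 0$; the simple-detection criterion then upgrades $FA\in\mathcal{D}^b(\mod B)$ to $FA\in K^b(\proj B)$.

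Part (2) follows the dual pattern. For $\Rightarrow$, let $Y\in K^b(\inj B)$; the assumption $F(\mathcal{D}^b(\mod A))\subseteq\mathcal{D}^b(\mod B)$ gives $FA\in\mathcal{D}^b(\mod B)$, so that $H^i(GY)\cong\Hom_{\mathcal{D}B}(FA,Y[i])$ is finite-dimensional and bounded in $i$, placing $GY$ in $\mathcal{D}^b(\mod A)$; and for each simple $A$-module $S$, $FS\in\mathcal{D}^b(\mod B)$, so $\Hom_{\mathcal{D}A}(S,GY[i])\cong\Hom_{\mathcal{D}B}(FS,Y[i])$ vanishes for $|i|\gg 0$, and the dual simple-detection criterion yields $GY\in K^b(\inj A)$. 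For $\Leftarrow$, any $X\in\mathcal{D}^b(\mod A)$ satisfies $DH^{-i}(FX)\cong\Hom_{\mathcal{D}B}(FX,DB[i])\cong\Hom_{\mathcal{D}A}(X,G(DB)[i])$, which is finite-dimensional and bounded in $i$ since $G(DB)\in K^b(\inj A)$; hence $FX\in\mathcal{D}^b(\mod B)$.

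The main obstacle is not computational but strategic: matching each non-trivial implication with the right test object — $DB$ recovers cohomological boundedness via Nakayama duality, while the simple modules $S$ (or $B/\rad B$) detect finite projective or injective dimension via Ext-vanishing. Once the probes are chosen, the proof is a routine chain of adjunction, Nakayama duality, and the simple-Ext characterisations of $K^b(\proj B)$ and $K^b(\inj B)$ inside $\mathcal{D}^b(\mod B)$.
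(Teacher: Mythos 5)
Your proof is correct, but it is worth noting that the paper itself gives no argument at all here: it disposes of part (1) by citing \cite[Lemma 2.7]{AKLY13} and of part (2) by duality. What you have written is, in effect, a complete self-contained proof of that cited lemma together with its dual. All the ingredients check out: the adjunction isomorphism evaluated at $A$ identifies $H^i(GY)$ with $\Hom_{\mathcal{D}B}(FA,Y[i])$; Nakayama duality $\Hom_{\mathcal{D}B}(Z,DB[i])\cong DH^{-i}(Z)$ converts the hypothesis $G(DB)\in\mathcal{D}^b(\mod A)$ (resp.\ $G(DB)\in K^b(\inj A)$) into boundedness and finite-dimensionality of the cohomology of $FA$ (resp.\ of $FX$); and the detection of $K^b(\proj)$ and $K^b(\inj)$ inside $\mathcal{D}^b(\mod)$ by eventual vanishing of $\Hom(-,S[i])$ and $\Hom(S,-[i])$ against simples is the standard minimal-resolution argument, correctly applied to the probe $B/\rad B$ in (1) and to the simples $FS$ in (2). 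The only points a referee would ask you to spell out are routine: that $\thick(A)=K^b(\proj A)$ reduces everything to the generator $A$, and that a finite-dimensional $k$-dual $DH^{-i}(FA)$ forces $H^{-i}(FA)$ itself to be finite-dimensional. Your route buys a proof that is independent of \cite{AKLY13} and makes transparent why the second equivalence is genuinely the $k$-dual of the first (swap $DB$ for $B/\rad B$ and projectives for injectives), at the cost of about a page where the paper spends one line.
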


\begin{proof}
(1) follows from \cite[Lemma 2.7]{AKLY13}, and (2) is dual to (1).
\end{proof}

\begin{lemma}\label{Lemma-restrict}
Let $A$, $B$ and $C$ be finite dimensional algebras, and
$$\xymatrix@!=4pc{\mathcal{D}B \ar[r]^{i_*} & \mathcal{D}A \ar@<-3ex>[l]_{i^*}
\ar@<+3ex>[l]_{i^!} \ar[r]^{j^!} & \mathcal{D}C \ar@<-3ex>[l]_{j_!}
\ar@<+3ex>[l]_{j_*} }$$ a recollement. Then the following statements
hold:

{\rm (1)} $i^*$ and $j_!$ restrict to $K^b(\proj)$;

{\rm (2)}  $i_*$ and $j^!$ restrict to $\mathcal{D}^b(\mod)$;

{\rm (3)} $i^!$ and $j_*$ restrict to $K^b(\inj)$.
\end{lemma}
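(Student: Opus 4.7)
The plan is to prove (1) directly and then deduce (2) and (3) by a cascade of applications of Lemma \ref{Lemma-adjoint} to the four adjoint pairs of the recollement.

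For (1), I will use the standard fact that a triangle functor between compactly generated triangulated categories preserves compactness as soon as its right adjoint preserves coproducts. In the given recollement, $i^{*}$ is left adjoint to $i_{*}$, and $i_{*}$ itself has a right adjoint $i^{!}$; therefore $i_{*}$ preserves coproducts, and hence $i^{*}$ sends compact objects to compact objects. Exactly the same argument applied to the chain $j_{!}\dashv j^{!}\dashv j_{*}$ shows that $j_{!}$ preserves compactness. Since up to isomorphism the compact objects of $\mathcal{D}A$, $\mathcal{D}B$, $\mathcal{D}C$ are precisely the objects of $K^{b}(\proj A)$, $K^{b}(\proj B)$, $K^{b}(\proj C)$ respectively, this gives (1).

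With (1) in hand, (2) and (3) are obtained by reading Lemma \ref{Lemma-adjoint} in the forward direction along the four adjoint pairs. Apply part (1) of that lemma to the pair $(i^{*},i_{*})$: since $i^{*}$ restricts to $K^{b}(\proj)$, its right adjoint $i_{*}$ restricts to $\mathcal{D}^{b}(\mod)$. Likewise, applied to $(j_{!},j^{!})$, the fact that $j_{!}$ restricts to $K^{b}(\proj)$ yields that $j^{!}$ restricts to $\mathcal{D}^{b}(\mod)$. Together these give (2). Now feed (2) into part (2) of Lemma \ref{Lemma-adjoint}: for the pair $(i_{*},i^{!})$, since $i_{*}$ restricts to $\mathcal{D}^{b}(\mod)$, its right adjoint $i^{!}$ restricts to $K^{b}(\inj)$; for the pair $(j^{!},j_{*})$, since $j^{!}$ restricts to $\mathcal{D}^{b}(\mod)$, its right adjoint $j_{*}$ restricts to $K^{b}(\inj)$. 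This proves (3).

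No step should pose any real difficulty; the only thing to be careful about is keeping straight which functor is the left and which is the right adjoint in each of the four pairs, so that Lemma \ref{Lemma-adjoint} is applied in the correct direction. The substantive content—that left adjoints with coproduct-preserving right adjoints preserve compactness, and the duality between $K^{b}(\proj)$ and $K^{b}(\inj)$ for finite dimensional algebras encoded in Lemma \ref{Lemma-adjoint}—does all of the work.
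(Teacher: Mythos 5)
Your proof is correct and follows exactly the route the paper takes: part (1) is the standard ``left adjoint of a coproduct-preserving functor preserves compactness'' argument the paper treats as clear, and parts (2) and (3) are the same cascade of applications of Lemma~\ref{Lemma-adjoint} to the adjoint pairs $(i^*,i_*)$, $(j_!,j^!)$, $(i_*,i^!)$ and $(j^!,j_*)$ that the paper invokes. You have merely written out the details the paper leaves implicit.
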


\begin{proof}
(1) is clear. (2) and (3) follow from Lemma~\ref{Lemma-adjoint}.
\end{proof}

\begin{lemma} \label{Lemma-downwards}
Let $A$, $B$ and $C$ be finite dimensional algebras, and
$$\xymatrix@!=4pc{\mathcal{D}B \ar[r]^{i_*} & \mathcal{D}A \ar@<-3ex>[l]_{i^*}
\ar@<+3ex>[l]_{i^!} \ar[r]^{j^!} & \mathcal{D}C \ar@<-3ex>[l]_{j_!}
\ar@<+3ex>[l]_{j_*} } \eqno {\rm (R)}$$  a recollement. Then the
following statements are equivalent:

{\rm (1)} The recollement {\rm (R)} can be extended one-step
downwards;

{\rm (2)} $i_*$ or/and $j^!$ restricts to $K^b(\proj)$;

{\rm (2')}  $i_*B \in K^b(\proj A)$ or/and $j^!(A) \in K^b(\proj
C)$;

{\rm (3)} $i^!$ or/and $j_*$ restricts to $\mathcal{D}^b(\mod)$;

{\rm (4)}  The recollement {\rm (R)} restricts to
$\mathcal{D}^-(\Mod)$.
\end{lemma}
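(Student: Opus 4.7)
The strategy is to reduce every condition to a statement about compactness preservation and then exploit the recollement triangles in (R), combined with Lemmas \ref{Lemma-adjoint} and \ref{Lemma-restrict}. I would organize the work into the ``horizontal'' equivalences (2) $\Leftrightarrow$ (2') $\Leftrightarrow$ (3) together with the internal equivalence of the two alternatives inside (2), followed by the ``vertical'' links (1) $\Leftrightarrow$ (2) and (2) $\Leftrightarrow$ (4).

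For the horizontal equivalences, (2) $\Leftrightarrow$ (2') is immediate because $K^b(\proj A)$, $K^b(\proj B)$, $K^b(\proj C)$ are the thick closures of their regular modules and triangle functors commute with thick closure. The equivalence (2) $\Leftrightarrow$ (3) follows from Lemma \ref{Lemma-adjoint}(1) applied to the adjoint pairs $(i_*, i^!)$ and $(j^!, j_*)$. The substantive step is showing that the two alternatives inside (2), namely ``$i_*$ restricts to $K^b(\proj)$'' and ``$j^!$ restricts to $K^b(\proj)$'', are equivalent to each other. I would apply the recollement triangle $j_! j^! A \to A \to i_* i^* A \to$ at the regular module $A$: since $A \in K^b(\proj A)$ and $K^b(\proj A)$ is thick, one gets $i_* i^* A \in K^b(\proj A) \Leftrightarrow j_! j^! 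A \in K^b(\proj A)$. The right-hand condition is equivalent to $j^! A \in K^b(\proj C)$: the forward direction is Lemma \ref{Lemma-restrict}, while the reverse uses that $j_!$ reflects compactness because it is a fully faithful left adjoint (via $\Hom(Y, \bigoplus Z_i) \cong \Hom(j_! Y, \bigoplus j_! Z_i)$ and compactness of $j_! Y$). Dually, $i_* i^* A \in K^b(\proj A) \Leftrightarrow i_* B \in K^b(\proj A)$, since $i^* A$ is a compact generator of $\mathcal{D}B$: it is compact by Lemma \ref{Lemma-restrict}, and generates because $\Hom(i^*A[n], N) = \Hom(A[n], i_*N)$ forces $i_* N = 0$ (as $A$ is a compact generator), whence $N = 0$ (as $i_*$ is a full embedding).

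For the vertical links, extending (R) one step downwards amounts to producing right adjoints $i^?$ to $i^!$ and $j^?$ to $j_*$ such that the lower three layers form a recollement of $\mathcal{D}A$ relative to $\mathcal{D}C$ and $\mathcal{D}B$. By Brown representability in compactly generated triangulated categories, $i^!$ acquires a right adjoint iff it preserves coproducts, which via the $(i_*, i^!)$-adjunction is equivalent to $i_*$ preserving compactness; symmetrically for $j_*$ and $j^!$. The remaining recollement axioms for layers 2--4 then follow from those of (R), using the fully faithfulness of $i_*$ and $j^!$ and the vanishing $i^! j_* = 0$. For (2) $\Leftrightarrow$ (4), one uses that $i^*$ and $j_!$ automatically restrict to $\mathcal{D}^-(\Mod)$, while restriction of $i_*, j^!, i^!, j_*$ to $\mathcal{D}^-(\Mod)$ is bootstrapped from the $K^b(\proj)$-restriction by realizing any bounded-above complex as a homotopy colimit of its brutal truncations in $K^b(\proj)$. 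The main obstacle is the internal equivalence of the two alternatives inside (2), for which the key inputs are the reflection of compactness by the fully faithful left adjoint $j_!$ and the fact that $i^* A$ generates $\mathcal{D}B$; once these are in place, all remaining equivalences are routine applications of the adjoint formalism.
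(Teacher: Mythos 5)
Your proposal is self-contained where the paper simply cites \cite{AKLY13} four times (Proposition 3.2(a) for (1)$\Leftrightarrow$(2), Lemma 2.5 for (2)$\Leftrightarrow$(2'), Lemma~\ref{Lemma-adjoint} for (2)$\Leftrightarrow$(3), and Proposition 4.11 for (2')$\Leftrightarrow$(4)), so you are in effect reproving the quoted results of Angeleri H\"ugel--K\"onig--Liu--Yang from scratch. Most of this is sound: the reduction of (2) to (2') via thick closures, the use of Lemma~\ref{Lemma-adjoint} for (2)$\Leftrightarrow$(3), the equivalence of the two alternatives in (2) via the triangle $j_!j^!A\to A\to i_*i^*A\to$ together with reflection of compactness by the fully faithful coproduct-preserving $j_!$ and the fact that $i^*A$ is a compact generator of $\mathcal{D}B$ (here you are tacitly using Neeman's theorem that the compacts are exactly $\thick$ of any compact generator, so that $\thick(i^*A)=K^b(\proj B)=\thick(B)$ --- worth saying explicitly), and the Brown-representability argument for (1)$\Leftrightarrow$(2), where the remaining recollement axioms for the shifted ladder are indeed routine (fully faithfulness of the new outer functor follows from the standard fact that in an adjoint triple the two outer functors are fully faithful simultaneously).

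The genuine gap is in (2)$\Leftrightarrow$(4): you only argue the direction (2)$\Rightarrow$(4). Your bootstrap via homotopy colimits of brutal truncations shows that a coproduct-preserving functor sending the regular module into $\mathcal{D}^-$ restricts to $\mathcal{D}^-(\Mod)$, which handles all six functors under hypothesis (2). But the converse (4)$\Rightarrow$(2') is not addressed, and it is not formal: knowing that $j^!$ restricts to $\mathcal{D}^-(\Mod)$ only gives $j^!A\in\mathcal{D}^-(\Mod C)$, which is strictly weaker than $j^!A\in K^b(\proj C)$. The actual content of \cite[Proposition 4.11]{AKLY13} in this direction comes from the \emph{third}-layer functors: writing $j_*\cong\RHom_C(j^!A,-)$ (resp.\ $i_?$-type adjoints), the requirement that this right adjoint carry $\mathcal{D}^-(\Mod C)$ into $\mathcal{D}^-(\Mod A)$ forces $\Ext^n_C(j^!A,-)$ to vanish in large degrees on modules, i.e.\ forces $j^!A$ to have finite projective dimension and hence to lie in $K^b(\proj C)$ (finite dimensionality of $C$ is used here). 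Without an argument of this kind your proof establishes only (1)$\Leftrightarrow$(2)$\Leftrightarrow$(2')$\Leftrightarrow$(3)$\Rightarrow$(4), not the full cycle of equivalences.
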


\begin{proof} (1) $\Leftrightarrow$ (2): It follows from \cite[Proposition 3.2
(a)]{AKLY13}.

(2) $\Leftrightarrow$ (2'): It follows from \cite[Lemma
2.5]{AKLY13}.

(2) $\Leftrightarrow$ (3): It follows from
Lemma~\ref{Lemma-adjoint}.

(4) $\Leftrightarrow$ (2'): It follows from \cite[Proposition
4.11]{AKLY13}.
\end{proof}

\begin{lemma} \label{Lemma-upwards}
Let $A$, $B$ and $C$ be finite dimensional algebras, and
$$\xymatrix@!=4pc{\mathcal{D}B \ar[r]^{i_*} & \mathcal{D}A \ar@<-3ex>[l]_{i^*}
\ar@<+3ex>[l]_{i^!} \ar[r]^{j^!} & \mathcal{D}C \ar@<-3ex>[l]_{j_!}
\ar@<+3ex>[l]_{j_*} } \eqno {\rm (R)}$$  a recollement. Then the
following statements are equivalent:

{\rm (1)} The recollement {\rm (R)} can be extended one-step
upwards;

{\rm (2)} $i_*$ or/and $j^!$ restricts to $K^b(\inj)$;

{\rm (2')}  $i_*(DB) \in K^b(\inj A)$ or/and $j^!(DA) \in K^b(\inj
C)$ where $D = \Hom_k(-,k)$;

{\rm (3)} $i^*$ or/and $j_!$ restricts to $\mathcal{D}^b(\mod)$;

{\rm (4)}  The recollement {\rm (R)} restricts to
$\mathcal{D}^+(\Mod)$.
\end{lemma}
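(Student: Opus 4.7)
The plan is to mirror the proof of Lemma~\ref{Lemma-downwards} with every step dualized through the $k$-linear duality $D = \Hom_k(-,k)$, which for finite dimensional algebras exchanges $K^b(\proj)$ with $K^b(\inj)$ and $\mathcal{D}^-(\Mod)$ with $\mathcal{D}^+(\Mod)$. The upward direction of the ladder is governed by left adjoints (rather than right adjoints), and by Lemma~\ref{Lemma-adjoint}(2) these are naturally controlled by $K^b(\inj)$ (rather than $K^b(\proj)$), so the whole chain of equivalences lines up cleanly.

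First I would verify (2) $\Leftrightarrow$ (3) directly from Lemma~\ref{Lemma-adjoint}(2): applied to the adjoint pair $(i^*, i_*)$ it gives that $i^*$ restricts to $\mathcal{D}^b(\mod)$ iff $i_*$ restricts to $K^b(\inj)$, and applied to $(j_!, j^!)$ it gives the analogous equivalence for $j_!$ and $j^!$. Next, (2) $\Leftrightarrow$ (2') reduces to the observation that for a finite dimensional algebra $A$ the category $K^b(\inj A)$ equals $\thick(DA)$ in $\mathcal{D}A$, so a triangle functor preserves $K^b(\inj)$ iff it sends the distinguished injective cogenerator there; this is the $D$-dual of \cite[Lemma 2.5]{AKLY13}.

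For (1) $\Leftrightarrow$ (2), extending (R) one step upwards amounts to producing left adjoints to $i^*$ and $j_!$. By Brown representability for the dual, valid in compactly generated triangulated categories, such left adjoints exist iff $i^*$ and $j_!$ preserve products; by a standard adjunction juggle this is equivalent to their right adjoints $i_*$ and $j^!$ sending the injective cogenerators $DB$ and $DA$ into $K^b(\inj)$, which is exactly (2'). This mirrors \cite[Proposition 3.2(a)]{AKLY13}. Finally, (4) $\Leftrightarrow$ (2') dualizes \cite[Proposition 4.11]{AKLY13}: bounded-below restriction of all six recollement functors is controlled precisely by how $i_*$ and $j^!$ act on $DB$ and $DA$, because each of the other five functors is obtained from these two via adjunctions that are compatible with $\mathcal{D}^+(\Mod)$.

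The hardest part will be transporting \cite[Propositions 3.2(a) and 4.11]{AKLY13} through the duality without losing information. The $D$-dual of a recollement of $\mathcal{D}A$ is a recollement of $\mathcal{D}A^{\op}$ with the ladder directions reversed, and one must match the roles of each of the six functors carefully before the downward statements of \cite{AKLY13} can be reinterpreted as the desired upward statements here. Once this dictionary is established, all four equivalences follow formally from their counterparts in Lemma~\ref{Lemma-downwards}.
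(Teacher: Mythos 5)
Your proposal is correct and takes essentially the same approach as the paper: the paper's entire proof of this lemma is the single sentence ``This lemma is dual to Lemma~\ref{Lemma-downwards},'' and your argument is precisely a careful unwinding of that duality, matching each equivalence to its counterpart in Lemma~\ref{Lemma-downwards} via $D=\Hom_k(-,k)$ and Lemma~\ref{Lemma-adjoint}(2). The extra care you take in setting up the dictionary between the six functors and the reversed ladder direction is exactly the content the paper leaves implicit.
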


\begin{proof} This lemma is dual to Lemma~\ref{Lemma-downwards}. \end{proof}

\begin{proposition} \label{Proposition-2-recollement}
Let $A$, $B$ and $C$ be finite dimensional algebras. Then the
following conditions are equivalent:

{\rm (1)} $\mathcal{D}A$ admits a $2$-recollement relative
to $\mathcal{D}B$ and $\mathcal{D}C$;

{\rm (2)} $\mathcal{D}A$ admits a recollement relative to
$\mathcal{D}B$ and $\mathcal{D}C$ in which two functors in the
second layer restrict to $K^b(\proj)$;

{\rm (3)} $\mathcal{D}A$ admits a recollement relative to
$\mathcal{D}B$ and $\mathcal{D}C$ in which two functors in the third
layer restrict to $\mathcal{D}^b(\mod)$;

{\rm (4)} $\mathcal{D}^-(\Mod A)$ admits a recollement relative to
$\mathcal{D}^-(\Mod B)$ and $\mathcal{D}^-(\Mod C)$;

{\rm (5)} $\mathcal{D}A$ admits a recollement relative to
$\mathcal{D}C$ and $\mathcal{D}B$ in which two functors in the first
layer restrict to $\mathcal{D}^b(\mod)$;

{\rm (6)} $\mathcal{D}A$ admits a recollement relative to
$\mathcal{D}C$ and $\mathcal{D}B$ in which two functors in the
second layer restrict to $K^b(\inj)$;

{\rm (7)} $\mathcal{D}^+(\Mod A)$ admits a recollement relative to
$\mathcal{D}^+(\Mod C)$ and $\mathcal{D}^+(\Mod B)$.

\end{proposition}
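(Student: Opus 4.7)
The plan is to view a $2$-recollement of $\mathcal{D}A$ relative to $\mathcal{D}B,\mathcal{D}C$ as a three-layer recollement together with one extra layer, and then read off all equivalences from Lemmas~\ref{Lemma-downwards} and \ref{Lemma-upwards}. The key observation is that a $2$-recollement has four consecutive layers of functors: its upper three layers form a recollement of $\mathcal{D}A$ relative to $\mathcal{D}B,\mathcal{D}C$, while its lower three layers form a recollement of $\mathcal{D}A$ relative to $\mathcal{D}C,\mathcal{D}B$ (the roles of the side categories swap under the one-layer shift because the new middle fully faithful embedding is the old $j_*$, which starts from $\mathcal{D}C$ and now plays the ``$i$-side'' role). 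Hence condition (1) is equivalent to each of: (a) there exists a recollement of $\mathcal{D}A$ rel $\mathcal{D}B,\mathcal{D}C$ which can be extended one step downwards, and (b) there exists a recollement of $\mathcal{D}A$ rel $\mathcal{D}C,\mathcal{D}B$ which can be extended one step upwards.

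With this dictionary in hand, I would first derive (1) $\Leftrightarrow$ (2) $\Leftrightarrow$ (3) $\Leftrightarrow$ (4) from Lemma~\ref{Lemma-downwards} applied to the recollement in (a). That lemma lists precisely the four relevant conditions: one-step downward extension, restriction of $i_*$ or $j^!$ to $K^b(\proj)$, restriction of $i^!$ or $j_*$ to $\mathcal{D}^b(\mod)$, and restriction of the whole recollement to $\mathcal{D}^-(\Mod)$. The only small discrepancy is that (2) and (3) of the Proposition demand that \emph{both} side functors restrict, whereas Lemma~\ref{Lemma-downwards} allows ``or/and''. This is reconciled by noting that once the recollement extends downwards to a $2$-recollement, the pair $(i_*,j^!)$ becomes the top layer of the shifted recollement and so both functors automatically preserve compactness; dually, $(i^!,j_*)$ then both restrict to $\mathcal{D}^b(\mod)$ via Lemma~\ref{Lemma-adjoint}.

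Symmetrically, I would establish (1) $\Leftrightarrow$ (5) $\Leftrightarrow$ (6) $\Leftrightarrow$ (7) by applying Lemma~\ref{Lemma-upwards} to the recollement in (b). Under the layer shift, the ``first layer'' in (5) corresponds to the original $j^!$ and $i_*$, while the ``second layer'' in (6) corresponds to the original $j_*$ and $i^!$. With this translation, conditions (5), (6), (7) match the four equivalents in Lemma~\ref{Lemma-upwards}, and the same one-forces-both argument handles the gap between ``or/and'' and ``both''.

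The main obstacle is purely bookkeeping: one must carefully identify the six functors of the original recollement rel $\mathcal{D}B,\mathcal{D}C$ with the six functors of the shifted recollement rel $\mathcal{D}C,\mathcal{D}B$ (old $i_*,j^!,i^!,j_*$ becoming new $\hat j_!,\hat i^*,\hat j^!,\hat i_*$ respectively, together with two new layer-$4$ functors) and confirm that ``first layer'' and ``second layer'' in (5) and (6) correspond correctly to the layers appearing in Lemma~\ref{Lemma-upwards}. Once this relabelling is fixed, no new argument is needed beyond Lemmas~\ref{Lemma-downwards} and \ref{Lemma-upwards} together with the automatic compactness-preservation of top-layer functors in any recollement.
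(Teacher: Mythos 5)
Your overall strategy is the paper's own: split the four layers of a $2$-recollement into the upper recollement (relative to $\mathcal{D}B$ and $\mathcal{D}C$) and the lower recollement (relative to $\mathcal{D}C$ and $\mathcal{D}B$), and read the equivalences off Lemma~\ref{Lemma-downwards} and Lemma~\ref{Lemma-upwards}. Your bookkeeping of how the six functors relabel under the one-layer shift is correct, and your reconciliation of the lemmas' ``or/and'' with the proposition's ``both'' (the upper layers of any recollement of derived categories of algebras preserve compactness, and Lemma~\ref{Lemma-adjoint} handles the dual layer) is exactly what is needed for conditions (2), (3), (5) and (6).

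There is, however, one genuine gap: the implications (4) $\Rightarrow$ (1) and (7) $\Rightarrow$ (1). Condition (4) of Lemma~\ref{Lemma-downwards} is a statement about a \emph{given} recollement of unbounded derived categories --- namely, that it restricts to $\mathcal{D}^-(\Mod)$ --- whereas condition (4) of the Proposition only asserts that \emph{some} recollement of $\mathcal{D}^-(\Mod A)$ relative to $\mathcal{D}^-(\Mod B)$ and $\mathcal{D}^-(\Mod C)$ exists, with no ambient $\mathcal{D}(\Mod)$-recollement in sight. To close the loop you must know that every $\mathcal{D}^-(\Mod)$-recollement lifts to a $\mathcal{D}(\Mod)$-recollement; this is \cite[Proposition 4.1]{AKLY13}, which the paper invokes explicitly, together with its (not entirely formal) analogue for $\mathcal{D}^+(\Mod)$ needed for (7) $\Rightarrow$ (1). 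Your argument as written never supplies this lifting, so those two implications do not follow from Lemmas~\ref{Lemma-downwards} and \ref{Lemma-upwards} alone; the remaining equivalences are complete.
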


\begin{proof} (1) $\Leftrightarrow$ (2) $\Leftrightarrow$ (3) $\Leftrightarrow$ (4):
By \cite[Proposition 4.1]{AKLY13}, any
$\mathcal{D}^-(\Mod)$-recollement can be lifted to a
$\mathcal{D}(\Mod)$-recollement. Then it follows from
Lemma~\ref{Lemma-downwards}.

(1) $\Leftrightarrow$ (5) $\Leftrightarrow$ (6) $\Leftrightarrow$
(7): Analogous to \cite[Proposition 4.1]{AKLY13}, any
$\mathcal{D}^+(\Mod)$-recollement can be lifted to a
$\mathcal{D}(\Mod)$-recollement as well. Then it follows from
Lemma~\ref{Lemma-upwards}.
\end{proof}

\begin{proposition} \label{Proposition-3-recollement}
Let $A$, $B$ and $C$ be finite dimensional algebras. Then the
following conditions are equivalent:

{\rm (1)} $\mathcal{D}A$ admits a $3$-recollement relative
to $\mathcal{D}B$ and $\mathcal{D}C$;

{\rm (2)} $\mathcal{D}A$ admits a recollement relative to
$\mathcal{D}B$ and $\mathcal{D}C$ in which all functors restrict to
$K^b(\proj)$;

{\rm (3)} $\mathcal{D}^b(\mod A)$ admits a recollement relative to
$\mathcal{D}^b(\mod C)$ and $\mathcal{D}^b(\mod B)$;

{\rm (4)} $\mathcal{D}A$ admits a recollement relative to
$\mathcal{D}B$ and $\mathcal{D}C$ in which all functors restrict to
$K^b(\inj)$;

{\rm (5)} $\mathcal{D}^b(\Mod A)$ admits a recollement relative to
$\mathcal{D}^b(\Mod C)$ and $\mathcal{D}^b(\Mod B)$.
\end{proposition}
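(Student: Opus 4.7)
The plan mirrors the proof of Proposition~\ref{Proposition-2-recollement}: split the five conditions into two three-term chains $(1)\Leftrightarrow(2)\Leftrightarrow(3)$ and $(1)\Leftrightarrow(4)\Leftrightarrow(5)$ and attack them in parallel, one using Lemma~\ref{Lemma-downwards} together with Lemma~\ref{Lemma-adjoint}(1), the other using Lemma~\ref{Lemma-upwards} together with Lemma~\ref{Lemma-adjoint}(2). I focus on the first chain; the second follows the same pattern with the downward lemmas replaced by their upward counterparts.

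For $(1)\Leftrightarrow(2)$, the key idea is to iterate Lemma~\ref{Lemma-downwards}. Given $(1)$, I would take the bottom three layers of the $3$-recollement as the candidate recollement $\mathrm{R}_1$ relative to $\mathcal{D}B$ and $\mathcal{D}C$; the two extra layers above $\mathrm{R}_1$ supply a second successive right adjoint for each of $i_*,j^!$ and for each of $i^!,j_*$, and the content of Lemma~\ref{Lemma-downwards} (Brown representability in disguise) then forces all six functors of $\mathrm{R}_1$ to restrict to $K^b(\proj)$. Conversely, given $(2)$, a first application of Lemma~\ref{Lemma-downwards} using $i_*,j^!\in K^b(\proj)$ extends $\mathrm{R}_1$ to a $2$-recollement; the resulting new middle recollement $\mathrm{R}_2$ is relative to $\mathcal{D}C$ and $\mathcal{D}B$, with its ``$i_*$'' and ``$j^!$'' roles played by $j_*$ and $i^!$, so a second application of Lemma~\ref{Lemma-downwards} using $j_*,i^!\in K^b(\proj)$ produces the fifth layer, yielding the required $3$-recollement.

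For $(2)\Leftrightarrow(3)$, the forward direction extracts the middle recollement $\mathrm{R}_2$ from the $3$-recollement: iterated application of Lemma~\ref{Lemma-adjoint}(1) along the five-term adjoint chain (each $K^b(\proj)$-restriction of a left adjoint delivering a $\mathcal{D}^b(\mod)$-restriction of its right adjoint) shows that all six functors of $\mathrm{R}_2$ restrict to $\mathcal{D}^b(\mod)$, so $\mathrm{R}_2$ restricts to the required recollement of $\mathcal{D}^b(\mod A)$. The reverse direction $(3)\Rightarrow(2)$ is the principal obstacle: one must lift a $\mathcal{D}^b(\mod)$-recollement back to a $\mathcal{D}(\Mod)$-recollement whose functors all restrict to $\mathcal{D}^b(\mod)$, the natural analogue of \cite[Proposition~4.1]{AKLY13} in the bounded finitely-generated setting. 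Once such a lift is in hand, Lemma~\ref{Lemma-adjoint}(1) converts each $\mathcal{D}^b(\mod)$-restriction on a right adjoint into a $K^b(\proj)$-restriction on its left adjoint, and together with the automatic $K^b(\proj)$-restriction of $i^*$ and $j_!$ this produces $(2)$, hence $(1)$ by the previous step. The parallel chain $(1)\Leftrightarrow(4)\Leftrightarrow(5)$ then runs the same argument with Lemma~\ref{Lemma-upwards} and Lemma~\ref{Lemma-adjoint}(2) in place of their downward counterparts; the appearance of $\mathcal{D}^b(\Mod)$ in $(5)$ rather than $\mathcal{D}^b(\mod)$ reflects that Lemma~\ref{Lemma-adjoint}(2) places $K^b(\inj)$ and $\mathcal{D}^b(\mod)$ on opposite sides of an adjoint pair from where Lemma~\ref{Lemma-adjoint}(1) does, shifting the fg-versus-all-modules bookkeeping in the lifting step.
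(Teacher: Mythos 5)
Your overall strategy --- shuttling between the layers via Lemma~\ref{Lemma-downwards}, Lemma~\ref{Lemma-upwards} and Lemma~\ref{Lemma-adjoint}, and invoking the lifting result \cite[Proposition 4.1]{AKLY13} for the $\mathcal{D}^b(\mod)$ condition --- is the same as the paper's, which proves $(1)\Leftrightarrow(2)\Leftrightarrow(3)$ and $(1)\Leftrightarrow(3)\Leftrightarrow(4)$ exactly from these ingredients. But there are two concrete problems. First, in $(1)\Rightarrow(2)$ you take the \emph{bottom} three layers and assert that the two layers \emph{above} supply successive right adjoints. Right adjoints sit below, not above: a functor preserves compactness when it has a right adjoint whose right adjoint exists. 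By Lemma~\ref{Lemma-restrict}, each of the bottom three layers of a $3$-recollement is the \emph{third} layer of some constituent recollement, so these are precisely the functors restricting to $K^b(\inj)$ --- they witness condition $(4)$, not $(2)$. For $(2)$ you must take the \emph{top} three layers, each of which is the first or second layer of a constituent recollement with two layers beneath it. Your converse direction $(2)\Rightarrow(1)$, iterating Lemma~\ref{Lemma-downwards}, uses the correct orientation, so the two halves of your argument are mutually inconsistent; the fix is just to swap ``bottom'' for ``top'' and ``above'' for ``below,'' but as written the forward implication proves the wrong statement.

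Second, and more seriously, your treatment of condition $(5)$ does not work. The ``dual chain'' through Lemma~\ref{Lemma-upwards} and Lemma~\ref{Lemma-adjoint}(2) lands you back at $\mathcal{D}^b(\mod)$, not at $\mathcal{D}^b(\Mod)$: Lemma~\ref{Lemma-adjoint}(2) pairs $K^b(\inj)$-restriction of a right adjoint with $\mathcal{D}^b(\mod)$-restriction (small $\mod$) of its left adjoint, so from $(4)$ you again obtain the recollement of $\mathcal{D}^b(\mod A)$ of condition $(3)$. There is no adjoint-lemma ``bookkeeping'' that produces $\mathcal{D}^b(\Mod)$; indeed $(5)$ is not the formal dual of $(3)$, since $D=\Hom_k(-,k)$ is not a duality on $\Mod$ of a finite dimensional algebra. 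The equivalence $(3)\Leftrightarrow(5)$ is a separate, nontrivial comparison of $\mathcal{D}^b(\mod)$-recollements with $\mathcal{D}^b(\Mod)$-recollements, which the paper imports as \cite[Proposition 4.1 and Corollary 4.9]{AKLY13}; this ingredient is missing from your plan.
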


\begin{proof}
(1) $\Leftrightarrow$ (2) $\Leftrightarrow$ (3) : It follows from
\cite[Proposition 4.1]{AKLY13} and Lemma~\ref{Lemma-downwards}.

(1) $\Leftrightarrow$ (3) $\Leftrightarrow$ (4) : It follows from
\cite[Proposition 4.1]{AKLY13} and Lemma~\ref{Lemma-upwards}.

(3) $\Leftrightarrow$ (5) : It follows from \cite[Proposition 4.1
and Corollary 4.9]{AKLY13}.
\end{proof}

\subsection{$n$-derived-simple algebras}

\indent\indent For any recollement of derived categories of finite
dimensional algebras, the Grothendieck group of the middle algebra
is the direct sum of those of the outer two algebras
\cite[Proposition 6.5]{AKLY13}. Thus the process of reducing
homological properties, homological invariants and homological
conjectures by recollements must terminate after finitely many
steps. This leads to derived simple algebras, whose derived
categories admit no nontrivial recollements any more. This
definition dates from Wiedemann \cite{Wie91}, where the author
considered the stratifications of bounded derived categories. Later
on, recollements of unbounded and bounded above derived categories
attract considerable attention, and so do the corresponding derived
simple algebras \cite{AKLY13}. When we consider the stratifications
along $n$-recollements, $n$-derived-simple algebras are defined
naturally.

\begin{definition}{\rm
A finite dimensional algebra $A$ is said to be {\it
$n$-derived-simple} if its derived category $\mathcal{D}A$ admits no
nontrivial $n$-recollements. }\end{definition}

Clearly, an $n$-derived-simple algebra must be
indecomposable/connected. Note that 1-derived-simple algebras are
just the $\mathcal{D}({\rm Mod})$-derived simple algebras. For
finite dimensional algebras, by
Proposition~\ref{Proposition-2-recollement} and
Proposition~\ref{Proposition-3-recollement}, 2 (resp.
3)-derived-simple algebras are exactly $\mathcal{D}^-({\rm Mod})$
(resp. $\mathcal{D}^b(\mod)$)-derived simple algebras in the sense
of \cite{AKLY13}. Moreover, $n$-derived-simple algebras must be
$m$-derived simple for all $m \geq n$, and it is worth noting that
for a finite dimensional algebra $A$ of finite global dimension, the
$n$-derived-simplicity of $A$ does not depend on the choice of $n$.

Although it is difficult to find out all the $n$-derived-simple
algebras, there are still some known examples.

\begin{example}\label{Example-n-derived-simple}
{\rm (1) Finite dimensional local algebras, blocks of finite group
algebras and indecomposable representation-finite symmetric algebras
are 1-derived-simple \cite{Wie91,LY12};

(2) Some finite dimensional two-point algebras of finite global
dimension are $n$-derived-simple for all $n \in \mathbb{Z}^+$ (Ref.
\cite{Happ91,LY13});

(3) Indecomposable symmetric algebras are 2-derived-simple
\cite{LY12};

(4) There exist 2-derived-simple algebras which are not
1-derived-simple \cite[Example 5.8]{AKLY13}, 3-derived-simple
algebras which are not 2-derived-simple \cite[Example 5.10]{AKLY13},
and 4-derived-simple algebras which are not 3-derived-simple
\cite[Example 4.13]{AKLY13}, respectively. }
\end{example}

Let's end this section by listing some known results on reducing
homological conjectures via recollements. First, the {\it finitistic
dimension conjecture}, which says that every finite dimensional
algebra has finite finitistic dimension, was reduced to
3-derived-simple algebras by Happel \cite{Hap93}. Recently, Chen and
Xi extended his result by reducing the finitistic dimension
conjecture to 2-derived-simple algebras \cite{CX14}. Second, it
follows from \cite[Proposition 2.9(b)]{Kel98} and \cite[Proposition
2.14]{AKLY13} that the {\it Hochschild homology dimension
conjecture}, which states that the finite dimensional algebras of
finite Hochschild homology dimension are of finite global dimension
\cite{Han06}, can be reduced to 2-derived-simple algebras. Last but
not least, both {\it vanishing conjecture} and {\it dual vanishing
conjecture} can be reduced to 3-derived-simple algebras
\cite{Yang14}.

\section{\large $n$-recollements and Cartan determinants}

\indent\indent In this section, we will observe the relations
between $n$-recollements and the Cartan determinants of algebras,
and reduce the Cartan determinant conjecture to 1-derived-simple
algebras.

Let $\mathcal{E}$ be a skeletally small exact category, $F$ the free
abelian group generated by the isomorphism classes $[X]$ of objects
$X$ in $\mathcal{E}$, and $F_0$ be the subgroup of $F$ generated by
$[X]-[Y]+[Z]$ for all conflations $0 \rightarrow X \rightarrow Y
\rightarrow Z \rightarrow 0 $ in $\mathcal{E}$. The {\it
Grothendieck group} $K_0(\mathcal{E})$ of $\mathcal{E}$ is the
factor group $F/F_0$. The Grothendieck group of a skeletally small
triangulated category is defined similarly, just need replace
conflations with triangles.

Let $A$ be a finite dimensional algebra and $\{P_1, \cdots , P_r\}$
a complete set of non-isomorphic indecomposable projective
$A$-modules. Then their tops $\{S_1, \cdots , S_r\}$ form a complete
set of non-isomorphic simple $A$-modules. The map $C_A : K_0(\proj
A) \rightarrow K_0(\mod A),$ $[P] \mapsto [P]$, is called the {\it
Cartan map} of $A$, which can be extended to $C_A : K_0(K^b(\proj
A)) \rightarrow K_0(\mathcal{D}^b(\mod A)), [X] \mapsto [X]$. The
matrix of the Cartan map $C_A$ under the $\mathbb{Z}$-basis
$\{[P_1], \cdots , [P_r]\}$ of $K_0(\proj A)$ and the
$\mathbb{Z}$-basis $\{[S_1], \cdots , [S_r]\}$ of $K_0(\mod A)$ is
called the {\it Cartan matrix} of $A$, and denoted by $C(A)$.
Namely, $C(A)$ is the $r \times r$ matrix whose $(i,j)$-th entry
$c_{ij}$ is the multiplicity of $S_i$ in $P_j$. Obviously, $c_{ij}$
equals to the composition length of the $\End_A(P_i)$-module
$\Hom_A(P_i,P_j)$, or $\dim_k\Hom_A(P_i,P_j) / \dim_k \End_A(S_i)$.

Now we study the relation between $n$-recollements and the Cartan
determinant of algebras. For convenience, we define $\det C(0) = 1$.
The following theorem is just Theorem I.

\begin{theorem} \label{Theorem-Cartan determinant}
Let $A'$, $A$ and $A''$ be finite dimensional algebras, and
$\mathcal{D}A$ admit an $n$-recollement relative to $\mathcal{D}A'$
and $\mathcal{D}A''$ with $n \geq 2$. Then $\det C(A)= \det C(A')
\cdot \det C(A'')$.
\end{theorem}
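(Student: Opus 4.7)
The plan is to use the 2-recollement (available since $n \geq 2$) to extract compatible split short exact sequences of Grothendieck groups and then to read $\det C(A)$ off a block triangular presentation of $C_A$ in bases adapted to the recollement. By Proposition~\ref{Proposition-2-recollement}, all four functors $i^*, j_!, i_*, j^!$ restrict to $K^b(\proj)$, and the functors $i^!, j_*$ restrict to $\mathcal{D}^b(\mod)$ (the restrictions of $i_*, j^!$ to $\mathcal{D}^b(\mod)$ being automatic by Lemma~\ref{Lemma-restrict}). Applying $K_0$ to the two BBD triangles, and using the adjunction identities $i^*i_* = j^!j_! = i^!i_* = j^!j_* = \id$ and $j^!i_* = i^!j_* = 0$, one obtains two split short exact sequences
\[
0 \to K_0(K^b(\proj A')) \xrightarrow{i_*} K_0(K^b(\proj A)) \xrightarrow{j^!} K_0(K^b(\proj A'')) \to 0,
\]
\[
0 \to K_0(\mathcal{D}^b(\mod A')) \xrightarrow{i_*} K_0(\mathcal{D}^b(\mod A)) \xrightarrow{j^!} K_0(\mathcal{D}^b(\mod A'')) \to 0,
\]
fitting into a commutative diagram whose vertical arrows are the Cartan maps $C_{A'}, C_A, C_{A''}$.

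In the $\mathbb{Z}$-bases $\mathcal{B}_P = \{i_*[P^{A'}_j]\}_j \cup \{j_![P^{A''}_k]\}_k$ of $K_0(K^b(\proj A))$ and $\mathcal{B}_S = \{i_*[S^{A'}_l]\}_l \cup \{j_*[S^{A''}_m]\}_m$ of $K_0(\mathcal{D}^b(\mod A))$ coming from the two split SES's, the decomposition $[Y] = i_*i^![Y] + j_*j^![Y]$ on the module side combined with the commutativity of the diagram yields $C_A(i_*[P^{A'}_j]) = i_*[P^{A'}_j]$, which lies entirely in the $i_*$-summand of $\mathcal{B}_S$ with coordinates equal to column $j$ of $C(A')$; while $C_A(j_![P^{A''}_k]) = [j_!P^{A''}_k]$ has $j_*$-part $j_*[P^{A''}_k]$ contributing column $k$ of $C(A'')$. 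Thus the matrix of $C_A$ in $\mathcal{B}_P, \mathcal{B}_S$ is block upper triangular of the form $\begin{pmatrix} C(A') & \ast \\ 0 & C(A'') \end{pmatrix}$, with determinant $\det C(A') \cdot \det C(A'')$.

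The main obstacle is reconciling this determinant with $\det C(A)$, which by definition is computed in the standard bases $\{[P_i^A]\}$ and $\{[S_i^A]\}$ of indecomposable projective and simple $A$-modules. The change-of-basis matrices from $\mathcal{B}_P$ and $\mathcal{B}_S$ to the standard bases are invertible integer matrices of determinant $\pm 1$, and the desired equality $\det C(A) = \det C(A') \cdot \det C(A'')$ requires their two signs to agree. I expect this to be handled through a compatible labeling of the indecomposable projectives and simples of $A$ by those of $A'$ (via $i_*$) and $A''$ (via $j_!$, $j_*$) arising from the recollement, exploiting the bijection between a projective and its top so that $\mathcal{B}_P$ and $\mathcal{B}_S$ are organized by the same underlying index set with matching orientations, whereupon the two sign contributions cancel. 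Finally, the boundary cases $A' = 0$ or $A'' = 0$ give trivial recollements and agree with the convention $\det C(0) = 1$.
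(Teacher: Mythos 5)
Your setup — passing to the 2-recollement, checking which functors restrict where, producing the adapted bases $\{[i_*P'_j]\}\cup\{[j_!P''_k]\}$ and $\{[i_*S'_l]\}\cup\{[j_*S''_m]\}$ via the two BBD triangles, and reading off the block upper triangular matrix $\left[\begin{smallmatrix} C(A') & \ast \\ 0 & C(A'')\end{smallmatrix}\right]$ for the Cartan map — coincides with the first half of the paper's proof and is correct. But the last step is a genuine gap, and it is exactly the hard point of the theorem. The two change-of-basis matrices (call them $U$ on the projective side and $Q$ on the simple side) are a priori unrelated unimodular integer matrices, and all you get from them is $\det C(A) = \pm \det C(A')\cdot\det C(A'')$. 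Your proposed fix --- a ``compatible labeling'' matching each $i_*P'_j$ with $i_*S'_j$ via the projective-to-top bijection so that ``the two sign contributions cancel'' --- does not work as stated: $i_*P'_j$ is not an indecomposable projective $A$-module and $i_*S'_j$ is not a simple $A$-module, so there is no top construction relating them inside $\mathcal{D}A$, and no canonical identification of $K_0(K^b(\proj A))$ with $K_0(\mathcal{D}^b(\mod A))$ under which $U$ and $Q$ would become the same (or sign-compatible) matrices. Note that already in the degenerate case $A''=0$ (a derived equivalence) the claim that the two signs agree is precisely the nontrivial content of $\det C(A)=\det C(A')$, which the paper obtains as a corollary of the theorem rather than as an input to it.

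The paper closes this gap with a second, independent computation. It introduces the Euler form $\langle [X],[Y]\rangle := \sum_{l}(-1)^l\dim_k\Hom_{K^b(\proj A)}(X,Y[l])$ on $K_0(K^b(\proj A))$. Its matrix in the standard basis is $D\cdot C(A)$ with $D=\diag\{\dim_k\End_A(S_u)\}$, while in the adapted basis (using that $i_*,j_!$ are full embeddings and $j^!i_*=0$) it is block triangular with diagonal blocks $D'\cdot C(A')$ and $D''\cdot C(A'')$. Since a bilinear form transforms by congruence $T^{\tr}(-)T$, the determinant changes by the strictly positive factor $(\det T)^2$, giving $\det C(A)=\frac{c'_1\cdots c'_{r'}c''_1\cdots c''_{r''}}{c_1\cdots c_r}(\det T)^2\det C(A')\det C(A'')$ with a positive coefficient; comparing with the $\pm$ identity from the basis change forces the sign to be $+1$ whenever the determinants are nonzero (and the zero case is trivial). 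You would need to supply this argument, or some genuine substitute for it, to complete your proof.
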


\begin{proof}

It follows from Proposition~\ref{Proposition-2-recollement} and
Lemma~\ref{Lemma-restrict} that $\mathcal{D}A$ admits a recollement
$$\xymatrix@!=4pc{ \mathcal{D}A' \ar[r]^{i_*}
& \mathcal{D}A \ar@<-3ex>[l]_{i^*} \ar@<+3ex>[l]_{i^!} \ar[r]^{j^!}
& \mathcal{D}A'' \ar@<-3ex>[l]_{j_!} \ar@<+3ex>[l]_{j_*}}$$ such
that $i^*, i_*, j_!$ and $j^!$ restrict to $K^b(\proj)$, and $i_*,
i^!, j^!$ and $j_*$ restrict to $\mathcal{D}^b(\mod)$.

Let $\{P'_1, \cdots , P'_{r'} \}$ (resp. $\{P_1, \cdots , P_r \}$,
$\{P''_1, \cdots , P''_{r''}\}$) be a complete set of non-isomorphic
indecomposable projective $A'$-modules (resp. $A$-modules,
$A''$-modules). Then their tops $\{S'_1, \cdots , S'_{r'} \}$ (resp.
$\{S_1, \cdots , S_r \}$, $\{S''_1, \cdots , S''_{r''}\}$) form a
complete set of non-isomorphic simple $A'$-modules (resp.
$A$-modules, $A''$-modules).  By \cite[Theorem 1.1]{CX12} or
\cite[Proposition 6.5]{AKLY13}, we have $r'+r''=r$.

Consider the triangles $j_!j^!P_u \rightarrow P_u \rightarrow
i_*i^*P_u \rightarrow $ for all $1 \leq u \leq r$. Since $P_u \in
K^b(\proj A)$, we have $j^!P_u \in K^b(\proj A'') = \tria \{P''_1,
\cdots , P''_{r''}\} \subseteq \mathcal {D}A''$ and $i^*P_u \in
K^b(\proj A') = \tria \{P'_1, \cdots , P'_{r'}\} \subseteq \mathcal
{D}A'$. Here, for a class $\mathcal {X}$ of objects in a
triangulated category $\mathcal {T}$, $\tria \mathcal {X}$ denotes
the smallest full triangulated subcategory of $\mathcal {T}$
containing $\mathcal {X}$. Furthermore, we have $j_!j^!P_u \in \tria
\{j_!P''_1, \cdots , j_!P''_{r''}\} \subseteq \mathcal {D}A$ and
$i_*i^*P_u \in \tria \{i_*P'_1, \cdots , i_*P'_{r'}\} \subseteq
\mathcal {D}A$. Hence $P_u \in \tria \{i_*P'_1, \cdots , i_*P'_{r'},
j_!P''_1, \cdots , j_!P''_{r''}\} \subseteq \mathcal {D}A$, and
$K^b(\proj A) = $ \linebreak $\tria \{P_1, \cdots , P_r\} = \tria
\{i_*P'_1, \cdots , i_*P'_{r'}, j_!P''_1, \cdots , j_!P''_{r''}\}
\subseteq \mathcal {D}A$. Therefore, $\{[i_*P'_1], \cdots ,
[i_*P'_{r'}], [j_!P''_1], \cdots , [j_!P''_{r''}]\}$ is a
$\mathbb{Z}$-basis of $K_0(K^b(\proj A))$.

Consider the triangles $i_*i^!S_u \rightarrow S_u \rightarrow
j_*j^!S_u \rightarrow $ for all $1 \leq u \leq r$. Since $S_u \in
\mathcal {D}^b(\mod A)$, we have $i^!S_u \in \mathcal {D}^b(\mod A')
= \tria\{S'_1, \cdots , S'_{r'}\} \subseteq \mathcal {D}A'$ and
$j^!S_u \in \mathcal {D}^b(\mod A'') = \tria\{S''_1, \cdots ,
S''_{r''}\} \subseteq \mathcal {D}A''$. Furthermore, we have
$i_*i^!S_u \in \tria \{ i_*S'_1, \cdots , i_*S'_{r'}\} \subseteq
\mathcal {D}A$ and $j_*j^!S_u \in \tria \{ j_*S''_1, \cdots ,
j_*S''_{r''}\} \subseteq \mathcal {D}A$. Hence $S_u \in \tria
\{i_*S'_1, \cdots , i_*S'_{r'}, j_*S''_1, \cdots , j_*S''_{r''}\}
\subseteq \mathcal {D}A$, and $\mathcal {D}^b(\mod A)$ $=
\tria\{S_1, \cdots , S_r\} = \tria \{i_*S'_1, \cdots , i_*S'_{r'},
j_*S''_1, \cdots , j_*S''_{r''}\} \subseteq \mathcal {D}A$.
Therefore, $\{[i_*S'_1], \cdots , [i_*S'_{r'}], [j_*S''_1], \cdots
,$ $[j_*S''_{r''}]\}$ is a $\mathbb{Z}$-basis of $K_0(\mathcal
{D}^b(\mod A))$.

We have clearly the following commutative diagram $$\xymatrix{
K_0(K^b(\proj A')) \ar[d]^-{C_{A'}} \ar[r]^-{i_*} & K_0(K^b(\proj
A)) \ar[d]^-{C_{A}}
\ar[r]^-{j^!} & K_0(K^b(\proj A'')) \ar[d]^-{C_{A''}} \\
K_0(\mathcal {D}^b(\mod A')) \ar[r]^-{i_*} & K_0(\mathcal {D}^b(\mod
A)) \ar[r]^-{j^!} & K_0(\mathcal {D}^b(\mod A'')), }$$ where the
horizonal maps are naturally induced by the functors $i_*$ and
$j^!$. It is not difficult to see that the matrix of the Cartan map
$C_A$ of $A$ under the $\mathbb{Z}$-basis $\{[i_*P'_1], \cdots ,
[i_*P'_{r'}], [j_!P''_1], \cdots , [j_!P''_{r''}]\}$ of
$K_0(K^b(\proj A))$ and the $\mathbb{Z}$-basis $\{[i_*S'_1], \cdots
, [i_*S'_{r'}], [j_*S''_1], \cdots , [j_*S''_{r''}]\}$ of
$K_0(\mathcal {D}^b(\mod A))$ is of the form $\left[\begin{array}{cc} C(A') & \ast \\
0 & C(A'') \end{array}\right]$. Note that the matrix of the Cartan
map $C_A$ of $A$ under the $\mathbb{Z}$-basis $\{[P_1], \cdots ,
[P_r]\}$ of $K_0(K^b(\proj A))$ and the $\mathbb{Z}$-basis $\{[S_1],
\cdots , [S_r]\}$ of $K_0(\mathcal {D}^b(\mod A))$ is just $C(A)$.

Both $\{[i_*P'_1], \cdots , [i_*P'_{r'}], [j_!P''_1], \cdots ,
[j_!P''_{r''}]\}$ and $\{[P_1], \cdots , [P_r]\}$ are
$\mathbb{Z}$-bases of $K_0(K^b(\proj A))$, thus there exist
invertible matrices $U$ and $V$ in $M_r(\mathbb{Z})$ such that
$$([i_*P'_1], \cdots , [i_*P'_{r'}], [j_!P''_1], \cdots ,
[j_!P''_{r''}]) = ([P_1], \cdots , [P_r]) \cdot U$$ and
$$([P_1], \cdots , [P_r]) = ([i_*P'_1], \cdots , [i_*P'_{r'}],
[j_!P''_1], \cdots , [j_!P''_{r''}]) \cdot V.$$ Hence, $UV=VU=I$,
the identity matrix. Therefore, $\det U = \det V = \pm 1$.

Both $\{[i_*S'_1], \cdots , [i_*S'_{r'}], [j_*S''_1], \cdots ,
[j_*S''_{r''}]\}$ and $\{[S_1], \cdots , [S_r]\}$ are
$\mathbb{Z}$-bases of $K_0(\mathcal {D}^b(\mod A))$, thus there
exist invertible matrices $Q$ and $R$ in $M_r(\mathbb{Z})$ such that
$$([i_*S'_1], \cdots , [i_*S'_{r'}], [j_*S''_1], \cdots ,
[j_*S''_{r''}]) = ([S_1], \cdots , [S_r]) \cdot Q$$ and
$$([S_1], \cdots , [S_r]) = ([i_*S'_1], \cdots , [i_*S'_{r'}], [j_*S''_1], \cdots ,
[j_*S''_{r''}]) \cdot R.$$ Hence, $QR=RQ=I$, the identity matrix.
Therefore, $\det Q = \det R = \pm 1$.

Combining the equalities above with $([P_1], \cdots , [P_r]) =
([S_1], \cdots , [S_r]) \cdot C(A)$ and $([i_*P'_1], \cdots ,
[i_*P'_{r'}], [j_!P''_1], \cdots , [j_!P''_{r''}]) = ([i_*S'_1],
\cdots , [i_*S'_{r'}], [j_*S''_1], $ \linebreak $
\cdots , [j_*S''_{r''}]) \cdot \left[\begin{array}{cc} C(A') & \ast \\
0 & C(A'') \end{array}\right]$, we have
$C(A) \cdot U = Q \cdot \left[\begin{array}{cc} C(A') & \ast \\
0 & C(A'') \end{array}\right]$. Furthermore, $\det C(A) = \pm \det
C(A') \cdot \det C(A'')$.

On the other hand, we can define a $\mathbb{Z}$-bilinear form
$$\langle -, - \rangle : K_0(K^b(\proj A)) \times K_0(K^b(\proj
A)) \rightarrow \mathbb{Z}$$ by $$\langle [X], [Y] \rangle :=
\sum_{l \in \mathbb{Z}} (-1)^l \ \dim_k \Hom_{K^b(\proj A)}
(X,Y[l]),$$ for all $X, Y \in K^b(\proj A)$.

Since $i_*$ and $j_!$ are full embeddings and $j^!i_*=0$, we have
$$\begin{array} {llll} \langle [i_*P'_u] , [i_*P'_v] \rangle
& = &  \dim _k \Hom_{A'}(P'_u,P'_v), &  u,v = 1, \cdots, r'; \\
\langle [j_!P''_u] , [i_*P'_v] \rangle & = &
0, & u = 1, \cdots , r''; \ v = 1, \cdots , r';\\
\langle [j_!P''_u] , [j_!P''_v]) \rangle & = & \dim_k
\Hom_{A''}(P''_u,P''_v), & u,v = 1, \cdots , r''.
\end{array}$$ Thus the matrix of $\langle - , - \rangle$ under the basis
$\{[i_*P'_1], \cdots , [i_*P'_{r'}], [j_!P''_1], \cdots
, $ \linebreak $[j_!P''_{r''}]\}$ is $\left[\begin{array}{cc} D' \cdot C(A') & \ast \\
0 & D'' \cdot C(A'') \end{array}\right]$ where $D' = \diag\{c'_1,
\cdots , c'_{r'}\}$ with $c'_v = \dim_k\End_{A'}(S'_v)$ for all $v =
1, \cdots , r'$ and $D'' = \diag\{c''_1, \cdots , c''_{r''}\}$ with
$c''_w = \dim_k\End_{A''}(S''_w)$ for all $w = 1, \cdots , r''$.

Let $D = \diag\{c_1, \cdots , c_r\}$ with $c_u = \dim_k\End_A(S_u)$
for all $u = 1, \cdots , r$. Note that the matrix of $\langle -, -
\rangle$ under the basis $\{[P_1], \cdots, [P_r]\}$ is $D \cdot
C(A)$. Thus $D \cdot C(A)$
and $\left[\begin{array}{cc} D' \cdot C(A') & \ast \\
0 & D'' \cdot C(A'') \end{array}\right]$ are the matrices of
$\langle -, - \rangle$ with respect to two different bases. Hence,
there exists an invertible matrix $T \in M_r(\mathbb{Z})$ such that
$D \cdot C(A) = T^{\tr} \cdot \left[\begin{array}{cc} D' \cdot C(A')  & \ast \\
0 & D'' \cdot C(A'') \end{array}\right] \cdot T$. Therefore, $\det
C(A) = \frac{c'_1 \cdots c'_{r'} c''_1 \cdots c''_{r''}}{c_1 \cdots
c_r} \cdot (\det T)^2 \cdot \det C(A') \cdot \det C(A'')$.

If $\det C(A) = 0$ then $\det C(A') \cdot \det C(A'') = 0$. Thus
$\det C(A) = \det C(A') \cdot \det C(A'')$. If $\det C(A) \neq 0$
then $\det C(A') \cdot \det C(A'') \neq 0$. Thus we must have
$\frac{c'_1 \cdots c'_{r'} c''_1 \cdots c''_{r''}}{c_1 \cdots c_r}
\cdot (\det T)^2 =1$ but not $-1$. Hence $\det C(A) = \det C(A')
\cdot \det C(A'')$.
\end{proof}

Applying Theorem~\ref{Theorem-Cartan determinant} to the trivial
2-recollement in Example~\ref{Proposition-2-recollement} (4), we can
obtain the following corollary which generalizes \cite[Proposition
1.5]{BS05} to an arbitrary base field.

\begin{corollary} \label{Corollary-DerEquCartan}
Let $A$ and $B$ be derived equivalent finite
dimensional algebras. Then $\det C(A) = \det C(B)$. \end{corollary}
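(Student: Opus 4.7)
The plan is to recognize the corollary as an immediate consequence of Theorem~\ref{Theorem-Cartan determinant} once a derived equivalence is repackaged as a trivial $2$-recollement. I will start from the observation, recorded in Example~\ref{Example-n-recollement}(4), that any derived equivalence $\mathcal{D}A \simeq \mathcal{D}B$ produces, for every $n \in \mathbb{Z}^+$, an $n$-recollement of $\mathcal{D}A$ one of whose outer terms is zero and the other is $\mathcal{D}B$; concretely, one stacks the equivalence together with its quasi-inverse on one side and the trivial functors to/from the zero category on the other. Regarding the zero category as $\mathcal{D}(0)$, this in particular yields a $2$-recollement of $\mathcal{D}A$ relative to $\mathcal{D}(0)$ and $\mathcal{D}B$.

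With this in hand, the second step is to apply Theorem~\ref{Theorem-Cartan determinant} with $n=2$, which gives
$$\det C(A) = \det C(0) \cdot \det C(B).$$
Using the convention $\det C(0) = 1$ adopted just before the statement of Theorem I, the right-hand side collapses to $\det C(B)$, yielding the desired equality.

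There is no real obstacle: the whole argument is absorbed into Theorem~\ref{Theorem-Cartan determinant} together with Example~\ref{Example-n-recollement}(4). The only minor point worth noting is that the zero algebra must be tolerated as one of the three algebras appearing in Theorem I; since its derived category is zero and its Cartan matrix is vacuous, the convention $\det C(0) = 1$ is precisely what makes the bookkeeping consistent, and no genuine computation or case analysis is needed beyond what Theorem I already provides.
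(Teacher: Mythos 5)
Your proof is correct and matches the paper's own argument exactly: the paper likewise obtains the corollary by applying Theorem~\ref{Theorem-Cartan determinant} to the trivial $2$-recollement arising from a derived equivalence as in Example~\ref{Example-n-recollement}(4), with the convention $\det C(0)=1$ handling the zero outer term. No further comment is needed.
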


Next we study the Cartan determinant conjecture. In 1954, Eilenberg
showed that if $A$ is a finite dimensional algebra of finite global
dimension then $\det C(A) = \pm 1$ (Ref. \cite{Eil54}). After that,
the following conjecture was posed:

\medskip

\noindent{\bf Cartan determinant conjecture.} Let $A$ be an artin
algebra of finite global dimension. Then $\det C(A) = 1$.

\medskip

The Cartan determinant conjecture remains open except for some
special classes of algebras, such as the algebras of global
dimension two \cite{Zac83}, the positively graded algebras
\cite{Wil83}, the Cartan filtered algebras \cite{FH86}, the left
serial algebras \cite{BFVH85}, the quasi-hereditary algebras
\cite{BF89}, and the artin algebras admitting a strongly adequate
grading by an aperiodic commutative monoid \cite{Sao98}.

\begin{proposition} \label{Proposition-CarDetConj}
Let $A'$, $A$ and $A''$ be finite dimensional algebras, and
$\mathcal{D}A$ admit a recollement relative to $\mathcal{D}A'$ and
$\mathcal{D}A''$. If both $A'$ and $A''$ satisfy the Cartan
determinant conjecture, then so does $A$.
\end{proposition}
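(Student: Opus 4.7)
The plan is to invoke Theorem~\ref{Theorem-Cartan determinant} on $A'$ and $A''$ and then apply the given hypothesis. Since the Cartan determinant conjecture is vacuous when $\gl(A) = \infty$, we may assume $\gl(A) < \infty$. The first step is to upgrade the given recollement: by Example~\ref{Example-n-recollement}(3), finiteness of $\gl(A)$ forces the recollement to extend to an $n$-recollement of $\mathcal{D}A$ by $\mathcal{D}A'$ and $\mathcal{D}A''$ for every $n \in \mathbb{Z}^+$, in particular to a $3$-recollement. Theorem~\ref{Theorem-Cartan determinant} then yields the factorisation
\[
\det C(A) = \det C(A') \cdot \det C(A'').
\]

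The main obstacle is verifying that $A'$ and $A''$ themselves have finite global dimension, so that the hypothesis of the proposition can be applied to them. I would proceed as follows. For any simple $A'$-module $S'$, Lemma~\ref{Lemma-restrict}(2) gives $i_*S' \in \mathcal{D}^b(\mod A) = K^b(\proj A)$, where the last equality uses $\gl(A) < \infty$. Applying the top-layer functor $i^*$, which restricts to $K^b(\proj)$ by Proposition~\ref{Proposition-3-recollement}(2), and using $i^*i_* \cong \id$ (since $i_*$ is fully faithful), we obtain $S' \cong i^*i_*S' \in K^b(\proj A')$. A parallel computation with $j_*$ and $j^!$, this time exploiting Lemma~\ref{Lemma-restrict}(3) together with $K^b(\inj A) \subseteq \mathcal{D}^b(\mod A) = K^b(\proj A)$, yields $S'' \in K^b(\proj A'')$ for every simple $A''$-module $S''$. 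Consequently $\gl(A') < \infty$ and $\gl(A'') < \infty$.

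The hypothesis now gives $\det C(A') = \det C(A'') = 1$, and the displayed factorisation collapses to $\det C(A) = 1$, establishing the conjecture for $A$.
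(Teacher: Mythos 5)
Your overall strategy is the paper's: assume $\gl A<\infty$ (otherwise the conjecture is vacuous for $A$), use Example~\ref{Example-n-recollement}(3) to upgrade the recollement, deduce that $A'$ and $A''$ also have finite global dimension so that the hypothesis applies to them, and then invoke Theorem~\ref{Theorem-Cartan determinant}. The one place you deviate is that the paper simply quotes \cite[Proposition 2.14]{AKLY13} for the descent of finite global dimension to the outer terms, whereas you verify it by hand. Your verification for $A'$ is correct: $i_*S'\in\mathcal{D}^b(\mod A)=K^b(\proj A)$, $i^*$ restricts to $K^b(\proj)$, and $i^*i_*\cong\id$.

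The verification for $A''$ does not work as written, and this is a genuine gap. Lemma~\ref{Lemma-restrict}(3) says $j_*$ carries $K^b(\inj A'')$ into $K^b(\inj A)$; to apply it to a simple module $S''$ you would need $S''\in K^b(\inj A'')$, i.e. $\id_{A''}S''<\infty$, which is exactly as unknown as the conclusion $\pd_{A''}S''<\infty$ you are trying to reach. What your chain of inclusions actually yields is $j_*(K^b(\inj A''))\subseteq K^b(\inj A)\subseteq K^b(\proj A)$, hence $K^b(\inj A'')\subseteq K^b(\proj A'')$, i.e. every finitely generated injective $A''$-module has finite projective dimension. That is strictly weaker than $\gl A''<\infty$: any non-semisimple self-injective algebra has $K^b(\inj)=K^b(\proj)$ and infinite global dimension. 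The repair is available from what you have already established: since the recollement extends to an $n$-recollement for every $n$, shifting down one layer produces a recollement with $\mathcal{D}A''$ as the \emph{left-hand} term, and your argument for $A'$ applies verbatim to it. Equivalently, Lemma~\ref{Lemma-downwards}(3) shows that $j_*$ restricts to $\mathcal{D}^b(\mod)$, so $j_*S''\in\mathcal{D}^b(\mod A)=K^b(\proj A)$ and $S''\cong j^!j_*S''\in K^b(\proj A'')$, because $j^!$ restricts to $K^b(\proj)$ by Proposition~\ref{Proposition-2-recollement}(2).
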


\begin{proof} If $A$ is of finite global dimension then so are
$A'$ and $A''$ by \cite[Proposition 2.14]{AKLY13}. Thus $\det C(A')
= \det C(A'') = 1$ by the assumption and the recollement induces a
2-recollement, see Example~\ref{Example-n-recollement} (3). By
Theorem~\ref{Theorem-Cartan determinant}, we have $\det C(A) = \det
C(A') \cdot \det C(A'') = 1$.
\end{proof}

The following corollary implies that the Cartan determinant
conjecture can be reduced to an arbitrary complete set of
representatives of the derived equivalence classes of finite
dimensional algebras.

\begin{corollary} Let $A$ and $B$ be derived equivalent finite
dimensional algebras. Then $A$ satisfies the Cartan determinant
conjecture if and only if so does $B$. \end{corollary}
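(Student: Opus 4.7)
The plan is to derive this corollary directly from the previously established Proposition~\ref{Proposition-CarDetConj}, using the observation (already recorded in Example~\ref{Example-n-recollement}(4)) that any derived equivalence between $A$ and $B$ produces a trivial $n$-recollement. Concretely, if $\mathcal{D}A \simeq \mathcal{D}B$, then $\mathcal{D}A$ admits a (trivial) recollement relative to $\mathcal{D}B$ and the zero category $0$, where the role of the ``other'' algebra is played by the zero algebra, which vacuously satisfies the Cartan determinant conjecture (under the convention $\det C(0) = 1$ adopted before Theorem~\ref{Theorem-Cartan determinant}).

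With this setup in hand, the proof is a one-line application of Proposition~\ref{Proposition-CarDetConj}: assume $B$ satisfies the Cartan determinant conjecture; then, since the trivial algebra also does, the proposition applied to the recollement $\mathcal{D}B \to \mathcal{D}A \to 0$ forces $A$ to satisfy the conjecture as well. The converse direction is symmetric: derived equivalence is a symmetric relation, so we may exchange the roles of $A$ and $B$ and argue identically using a trivial recollement $\mathcal{D}A \to \mathcal{D}B \to 0$.

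An alternative route, which I would mention only if a self-contained argument is preferred, is to invoke Corollary~\ref{Corollary-DerEquCartan} together with the fact that finiteness of global dimension is preserved under derived equivalence (a special case of \cite[Proposition 2.14]{AKLY13} applied to the trivial recollement): if $B$ has finite global dimension, so does $A$, whence $\det C(A) = 1$ by hypothesis, and then $\det C(B) = \det C(A) = 1$ by Corollary~\ref{Corollary-DerEquCartan}.

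There is essentially no obstacle here; the only subtle point is to recognize that ``trivial $n$-recollement'' is the correct bridge between derived equivalence and the machinery of Proposition~\ref{Proposition-CarDetConj}, and to invoke the convention $\det C(0)=1$ so that the zero factor is handled cleanly. Once these are in place, the argument collapses to a single application of an already proved reduction statement.
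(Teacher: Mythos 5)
Your proof is correct and follows exactly the paper's route: the paper also deduces the corollary by applying Proposition~\ref{Proposition-CarDetConj} to the trivial recollements coming from the derived equivalence (Example~\ref{Example-n-recollement}~(4)), with the convention $\det C(0)=1$ handling the zero factor. The alternative argument you sketch via Corollary~\ref{Corollary-DerEquCartan} is also valid but is not needed.
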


\begin{proof} It is enough to apply
Proposition~\ref{Proposition-CarDetConj} to the trivial
recollements, see Example~\ref{Example-n-recollement} (4).
\end{proof}

Applying Proposition~\ref{Proposition-CarDetConj}, we can reduce the
Cartan determinant conjecture to 1-derived-simple algebras.

\begin{corollary} \label{Corollary-Cartan determinant}
The Cartan determinant conjecture holds for all finite dimensional
algebras if and only if it holds for all 1-derived-simple algebras.
\end{corollary}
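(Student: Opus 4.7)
The forward direction is trivial, so the task is the converse: assuming the Cartan determinant conjecture for all $1$-derived-simple algebras, deduce it for all finite dimensional algebras. The plan is to induct on the number $r$ of isomorphism classes of simple modules of $A$, using Proposition~\ref{Proposition-CarDetConj} as the inductive step.

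For the base case $r=1$, the algebra $A$ is local, hence in particular $1$-derived-simple (see Example~\ref{Example-n-derived-simple}~(1)), so the hypothesis applies directly. For the inductive step, assume the conjecture holds for all finite dimensional algebras of finite global dimension with fewer than $r$ simples, and let $A$ be of global dimension $<\infty$ with exactly $r$ simples. If $A$ is $1$-derived-simple, we are done by hypothesis. Otherwise $\mathcal{D}A$ admits a nontrivial recollement relative to $\mathcal{D}A'$ and $\mathcal{D}A''$ for some finite dimensional algebras $A'$ and $A''$; here one uses that the outer terms of any nontrivial recollement of $\mathcal{D}A$ are again derived categories of finite dimensional algebras, which is standard in the setting invoked in the paper.

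By \cite[Proposition~6.5]{AKLY13} (already cited in the proof of Theorem~\ref{Theorem-Cartan determinant}), the numbers $r'$ and $r''$ of simples of $A'$ and $A''$ satisfy $r'+r''=r$, and nontriviality of the recollement forces $r',r''\geq 1$, so both are strictly less than $r$. Moreover, by \cite[Proposition~2.14]{AKLY13}, the finiteness of global dimension of $A$ descends to $A'$ and $A''$. The inductive hypothesis therefore applies to both $A'$ and $A''$, giving $\det C(A')=\det C(A'')=1$. Applying Proposition~\ref{Proposition-CarDetConj} yields $\det C(A)=1$, completing the induction.

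The only subtlety is ensuring that each nontrivial reduction step lands in the class of finite dimensional algebras over $k$, so that both the inductive hypothesis and Proposition~\ref{Proposition-CarDetConj} are applicable; this is the one point I would state carefully rather than gloss over, but it is already built into the framework of $n$-recollements of derived categories of algebras adopted throughout the paper.
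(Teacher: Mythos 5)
Your proof is correct and follows essentially the same route as the paper: the paper simply cites \cite[Proposition 6.5]{AKLY13} for the existence of a finite stratification of $\mathcal{D}A$ along recollements with 1-derived-simple factors and then applies Proposition~\ref{Proposition-CarDetConj}, whereas you unpack that finiteness into an explicit induction on the number of simples using the same additivity statement $r'+r''=r$. The one subtlety you flag (that the outer terms stay finite dimensional) is exactly what the citation of \cite{AKLY13} is absorbing in the paper's version.
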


\begin{proof} For any finite dimensional algebra $A$,
by \cite[Proposition 6.5]{AKLY13}, $\mathcal{D}A$ admits a finite
stratification of derived categories along recollements with
1-derived-simple factors. Then the corollary follows from
Proposition~\ref{Proposition-CarDetConj}.
\end{proof}

Although Corollary~\ref{Corollary-Cartan determinant} provides a
reduction technique, the Cartan determinant conjecture seems far
from being settled, because it is still a problem to deal with all
the 1-derived-simple algebras of finite global dimension.
Nonetheless, for the known examples described in
Example~\ref{Example-n-derived-simple} (1) and (2), the Cartan
determinant conjecture holds true \cite{Happ91,LY13}.

Let's end this section by pointing out that
Theorem~\ref{Theorem-Cartan determinant} can be applied to prove the
$n$-derived-simplicity of certain algebras as well.

\begin{remark} \label{two-point}
{\rm A finite dimensional two-point algebra $A$ with $\det C(A)\leq
0$ must be 2-derived-simple: Otherwise, $\mathcal{D}A$ admits a
non-trivial 2-recollement relative to $\mathcal{D}B$ and
$\mathcal{D}C$. Then both $B$ and $C$ are finite dimensional local
algebras since $A$ has only two simple modules up to isomorphism.
Therefore, $\det C(B) > 0$ and $\det C(C) > 0$. By
Theorem~\ref{Theorem-Cartan determinant}, we get $\det C(A)> 0$. It
is a contradiction. The examples of this kind of 2-derived-simple
algebras include:

(1) $\xymatrix{1 \ar@<+0.5ex>[r]^\alpha & 2 \ar@<+0.5ex>[l]^\beta},
(\alpha \beta)^n=0=(\beta \alpha)^n, n \in \mathbb{Z}^+$ ;

(2) $\xymatrix{1 \ar@<+0.5ex>[r]^\alpha \ar@(ul,dl)_\gamma& 2
\ar@<+0.5ex>[l]^\beta \ar@(ur,dr)^\delta}$, $\alpha \beta = \beta
\alpha = \gamma^2 = \delta^2 = \gamma\alpha-\alpha\delta = \delta\beta-\beta\gamma =0$ ;

(3) Let $A$ be one of the algebras in (1) and (2), and $B$ an
arbitrary finite dimensional local algebra. Then the tensor product
algebra $A \otimes_k B$ is again 2-derived-simple by the same
reason. }
\end{remark}

\begin{remark}
{\rm A representation-finite selfinjective two-point algebra $A$
with $\det C(A) \leq 0$ must be 1-derived-simple. Indeed, for a
representation-finite selfinjective algebra $A$, if $\mathcal{D}A$
admits a recollement relative to $\mathcal{D}B$ and $\mathcal{D}C$,
this recollement must be perfect \cite[Proposition 4.1]{LY12}.
Therefore, the 2-derived-simplicity of these algebras implies the
1-derived-simplicity. For example, the algebras in
Remark~\ref{two-point} (1) are 1-derived-simple.}
\end{remark}

\section{\large $n$-recollements and homological smoothness}

\indent\indent In this section, we will observe the relation between
$n$-recollements and the homological smoothness of algebras.

Let $A$ be an algebra and $A^e := A^{\op} \otimes_k A$ its
enveloping algebra. The algebra $A$ is said to be {\it smooth} if
the projective dimension of $A$ as an $A^e$-module is finite
\cite{VdB98}. The algebra $A$ is said to be {\it homologically
smooth} if $A$ is compact in $\mathcal{D}(A^e)$, i.e., $A$ is
isomorphic in $\mathcal{D}(A^e)$ to an object in $K^b(\proj A^e)$
(Ref. \cite{KS06}). Clearly, all homologically smooth algebras are
smooth. Moreover, if $A$ is a finite dimensional algebra then the
concepts of smoothness and homological smoothness coincide. However,
they are different in general. For example, the infinite Kronecker
algebra is smooth but not homologically smooth \cite[Remark
4]{Han14}.

Let $A$ and $B$ be two derived equivalent algebras. Then, by
\cite[Proposition 2.5]{Ric91}, there is a triangle equivalence
functor from $\mathcal{D}(A^e)$ to $\mathcal{D}(B^e)$ sending
$A_{A^e}$ to $B_{B^e}$. Since the equivalence functor can restrict
to $K^b(\Proj)$ and $K^b(\proj)$, both the smoothness and the
homological smoothness of algebras are invariant under derived
equivalences. Moreover, the relations between recollements and the
smoothness of algebras have been clarified in \cite{Han14}:

\begin{proposition} \label{Proposition-smooth} {\rm (See \cite[Theorem 3]{Han14})}
Let $A$, $B$ and $C$ be algebras, and $\mathcal{D}A$ admit a
recollement relative to $\mathcal{D}B$ and $\mathcal{D}C$. Then $A$
is smooth if and only if so are $B$ and $C$.
\end{proposition}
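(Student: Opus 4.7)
My plan is to characterize smoothness via $\pd_{A^e}(A) < \infty$, realize the recollement functors as derived tensor products with bimodule complexes, and then transport the finiteness of projective dimension between the three enveloping derived categories. The key tool is Keller's derived Morita theory: each of the six triangle functors in the recollement of $\mathcal{D}A$ relative to $\mathcal{D}B$ and $\mathcal{D}C$ commutes with direct sums (by the existence of adjoints) and hence admits a presentation as $-\otimes^L_? M$ or $\RHom_?(M,-)$ for suitable complexes of bimodules. Concretely, set $X := i_*(B) \in \mathcal{D}(B \otimes A^{\op})$ and $Y := j^*(A) \in \mathcal{D}(A \otimes C^{\op})$ (with analogous complexes obtained by adjunction for the remaining functors); the recollement axioms (R1)--(R4) then translate into identities and triangles of bimodule complexes.

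\textbf{The two directions.} For the ``$\Leftarrow$'' direction, I would lift the recollement triangle $j_!j^!A \to A \to i_*i^*A \to$ from $\mathcal{D}A$ to $\mathcal{D}(A^e)$ via the bimodule presentations above. The outer terms then become derived tensor products of fixed bimodule complexes with $B \in \mathcal{D}(B^e)$ and $C \in \mathcal{D}(C^e)$ respectively. If $B$ and $C$ are smooth, they admit bounded resolutions by projective bimodules, and tensoring those with the fixed bimodule complexes yields resolutions of bounded length over $A^e$. The standard estimate of projective dimension along a triangle then gives $\pd_{A^e}(A) < \infty$. For the ``$\Rightarrow$'' direction, I would use the compositions $i^*i_* \cong \id_{\mathcal{D}B}$ and $j^*j_* \cong \id_{\mathcal{D}C}$ lifted to the bimodule setting to realize $B$ and $C$ as derived images of $A$; applying the corresponding bimodule tensor/$\RHom$ functors to a bounded $A^e$-projective resolution of $A$ produces bounded resolutions of $B$ over $B^e$ and $C$ over $C^e$.

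\textbf{Main obstacle.} The principal difficulty is that the recollement is a priori only a structure on the one-sided derived categories, whereas smoothness is a bimodule (i.e. $A^e$-level) statement, so one must genuinely construct a compatible bimodule-level structure relating $\mathcal{D}(A^e)$ to $\mathcal{D}(B^e)$ and $\mathcal{D}(C^e)$. This is delicate because only the functors in the upper layer of a general recollement preserve compactness, so the bimodule complexes representing $i_*$, $j_*$, $i^!$, $j_!$ need not lie in $K^b(\proj)$; the whole argument therefore has to proceed in $\Proj$ rather than in $\proj$, which is why the conclusion is about smoothness (finite projective dimension) and not about homological smoothness (compactness in $\mathcal{D}(A^e)$). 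Showing that derived tensoring with the bimodule complex $X$ sends $B \in K^b(\Proj B^e)$ into $K^b(\Proj A^e)$ reduces to verifying that $X$ has finite projective dimension on the $A$-side, which in turn follows from the existence of the appropriate adjoints in the recollement. Once this bimodule-level recollement is in place, the projective-dimension transfer in both directions is essentially formal.
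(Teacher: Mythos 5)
The paper does not actually prove this proposition; it quotes it from \cite[Theorem 3]{Han14}, and the closest argument written out here is the proof of Theorem~\ref{Theorem-homologically smooth}(3). Your architecture---represent the six functors by bimodule complexes, lift the triangle $j_!j^!A \to A \to i_*i^*A$ to $\mathcal{D}(A^e)$ via the induced recollements of enveloping algebras, and push finiteness of projective dimension through the outer terms---is exactly the architecture of that proof and of Han's. But there is a genuine gap at the step you yourself flag as the main obstacle, and your proposed resolution of it is wrong. You claim that the bimodule complex representing $i_*$ (your $X=i_*(B)$) has finite projective dimension on the $A$-side ``because of the existence of the appropriate adjoints.'' That reasoning is valid only for the bimodules representing the first-layer functors $j_!$ and $j^!$: since $j_!$ preserves compactness, $j_!(C)$ is perfect over $A$ and $\RHom_A(j_!(C),A)$ is perfect over $A^{\op}$. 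It fails for the bimodules representing $i_*$ and $i^*$. Indeed, for finite dimensional algebras a bounded complex of finitely generated modules lies in $K^b(\Proj A)$ if and only if it lies in $K^b(\proj A)$, so $(i_*B)_A\in K^b(\Proj A)$ would force the recollement to extend one step downwards by Lemma~\ref{Lemma-downwards}; this is false for recollements that are not $2$-recollements, which exist by \cite[Example 5.8]{AKLY13}. If your justification were sound, essentially the same tensoring argument would show that homological smoothness of $B$ and $C$ implies that of $A$ for every $1$-recollement, contradicting Example~\ref{Example-Kronecker} and the requirement $n\geq 3$ in Theorem~\ref{Theorem-homologically smooth}(3).

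The missing ingredient is a detour through global dimension. Smoothness of an algebra forces finiteness of its global dimension (tensor a bounded $A^e$-projective resolution of $A$ with any one-sided module; the terms restrict to free one-sided modules since $k$ is a field), and finiteness of global dimension transfers between $A$ and the pair $(B,C)$ along any recollement by \cite[Proposition 2.14]{AKLY13}, as already used in the proof of Proposition~\ref{Proposition-CarDetConj}. So in the ``if'' direction you must first conclude that $A$ has finite global dimension; only then do the one-sided restrictions of the bimodules representing $i^*$ and $i_*$---which have bounded cohomology, e.g.\ $i^*(A)$ is perfect over the outer algebra because $i^*$ preserves compactness---automatically lie in $K^b(\Proj A^{\op})$ and $K^b(\Proj A)$, and your tensoring argument goes through (this is also precisely where $\Proj$ rather than $\proj$ is forced, as you correctly observe). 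The symmetric issue occurs in the ``only if'' direction for the $B$-side of $i_*(B)$ and the $C$-side of $j_!(C)$, and is repaired the same way: $A$ smooth first gives $B$ and $C$ finite global dimension. With this insertion your proof is correct and agrees with the intended one; without it, the key step is unjustified and, as stated, false.
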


However, Proposition~\ref{Proposition-smooth} is not correct for
homological smoothness any more. Here is an example:

\begin{example} \label{Example-Kronecker} {\rm (See \cite[Remark 4]{Han14})
Let $A$ be the infinite Kronecker algebra $\left[\begin{array}{cc} k
& 0 \\ k^{(\mathbb{N})} & k  \end{array}\right]$. Then by
Example~\ref{Example-n-recollement} (2), $\mathcal{D}A$ admits a
2-recollement relative to $\mathcal{D}k$ and $\mathcal{D}k$, but $A$
is not homologically smooth. }
\end{example}

Due to Example~\ref{Example-Kronecker}, even though $\mathcal{D}A$
admits a 2-recollement relative to $\mathcal{D}B$ and
$\mathcal{D}C$, the homological smoothness of $B$ and $C$ can not
imply the homological smoothness of $A$. Nonetheless, we have the
following theorem which is just Theorem II.

\begin{theorem} \label{Theorem-homologically smooth}
Let $A$, $B$ and $C$ be algebras, and $\mathcal{D}A$ admit an
$n$-recollement relative to $\mathcal{D}B$ and $\mathcal{D}C$.

{\rm (1)} $n=1$: if $A$ is homologically smooth then so is $B$;

{\rm (2)} $n=2$: if $A$ is homologically smooth then so are $B$ and
$C$;

{\rm (3)} $n \geq 3$: $A$ is homologically smooth if and only if so
are $B$ and $C$.
\end{theorem}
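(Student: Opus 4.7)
My plan is to proceed case by case on $n$, using the structural results of Section~2 to identify which functors in the $n$-recollement preserve compactness, and then transporting the compactness of the diagonal bimodule via the bimodule representations of these functors.

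Setup. By Keller's representation theorem, every coproduct-preserving triangle functor $F:\mathcal{D}A\to\mathcal{D}B$ is naturally isomorphic to $-\otimes^L_A F(A)$, with $F(A)$ carrying an induced $A^{\op}\otimes B$-module structure. In the $n$-recollement this applies to all of $i^*, i_*, j_!, j^!$ (each having both left and right adjoints): one writes $i^*\simeq -\otimes^L_A M$ with $M:=i^*A$, and analogously for the others. Packaging these bimodule representatives into two-sided tensor products produces enveloping-algebra-level functors such as
\[
\Phi_{i^*}:\mathcal{D}(A^e)\longrightarrow\mathcal{D}(B^e),\qquad \Phi_{i^*}(X):=M^{\vee}\otimes^L_A X\otimes^L_A M,
\]
where $M^{\vee}:=\RHom_B(M,B)$. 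The identity $i^*i_*\simeq \id_{\mathcal{D}B}$ rewrites as $M^{\vee}\otimes^L_A M\simeq B$ in $\mathcal{D}(B^e)$, so $\Phi_{i^*}(A)\simeq B$; moreover $\Phi_{i^*}$ preserves compactness whenever $M$ is compact as a $B$-module. Analogous constructions give bimodule lifts $\Phi_{j^!}$, $\Psi_{i_*}$, $\Psi_{j_!}$.

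For (1), $n=1$: Lemma~\ref{Lemma-restrict} gives $M\in K^b(\proj B)$, so $\Phi_{i^*}$ preserves compactness. Since $A\in K^b(\proj A^e)$ by hypothesis, $B\simeq \Phi_{i^*}(A)\in K^b(\proj B^e)$, i.e., $B$ is homologically smooth. For (2), $n=2$: Proposition~\ref{Proposition-2-recollement} further ensures $j^!A\in K^b(\proj C)$, and the same argument applied to $\Phi_{j^!}$ yields $C$ homologically smooth. For (3), $n\ge 3$: the novel content is the converse $B, C$ smooth $\Rightarrow A$ smooth. Proposition~\ref{Proposition-3-recollement} now gives that all six functors restrict to $K^b(\proj)$, so in particular $i_*$ and $j_!$ preserve compactness. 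Their enveloping-algebra lifts $\Psi_{i_*}:\mathcal{D}(B^e)\to\mathcal{D}(A^e)$ and $\Psi_{j_!}:\mathcal{D}(C^e)\to\mathcal{D}(A^e)$ likewise preserve compactness and take $B\mapsto i_*B$, $C\mapsto j_!C$, so $i_*B, j_!C\in K^b(\proj A^e)$ under the hypotheses. The bimodule lift of the canonical recollement triangle $j_!j^!A\to A\to i_*i^*A\to$ then exhibits $A$ as an extension of two compact $A^e$-modules, so $A\in K^b(\proj A^e)$, proving homological smoothness.

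The main obstacle will be the careful promotion of functors and distinguished triangles from $\mathcal{D}A$ to $\mathcal{D}(A^e)$: a triangle functor $\mathcal{D}B\to\mathcal{D}A$ does not a priori induce one $\mathcal{D}(B^e)\to\mathcal{D}(A^e)$. One must verify that the identifications $M^{\vee}\otimes^L_A M\simeq B$, the orthogonalities $j^!i_*=0$, and the recollement triangles all transfer faithfully to the enveloping-algebra level; this is done via Keller's two-sided bar construction, which produces the required bimodule complexes. Once this bookkeeping is in place, the three parts of the theorem follow cleanly from the compactness-preservation results collected in Section~2.
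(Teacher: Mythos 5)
Your parts (1) and (2) are essentially sound and agree in substance with the paper, which simply cites Keller--Van den Bergh for (1) and obtains (2) by reversing the $2$-recollement; note only that, since Theorem II concerns arbitrary algebras, the compactness of $i^*A$ and $j^!A$ over $B$ and $C$ should be justified by the general adjoint-functor argument (the upper $n$ layers of an $n$-recollement preserve compactness), not by Propositions~\ref{Proposition-2-recollement} and \ref{Proposition-3-recollement}, which are stated only for finite dimensional algebras.

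The gap is in part (3), in the implication that $B$ and $C$ homologically smooth forces $A$ homologically smooth. You justify the compactness-preservation of the lift $\Psi_{i_*}$ by the fact that $i_*$ preserves compactness, and this is not enough. Writing $i^*\simeq-\otimes^L_AM$ with $M=i^*A\in\mathcal{D}(A^{\op}\otimes_kB)$ and $i_*\simeq-\otimes^L_BM^{\vee}$ with $M^{\vee}=\RHom_B(M,B)=i_*B$, the relevant two-sided functor is $X\mapsto M\otimes^L_BX\otimes^L_BM^{\vee}$, which sends $B$ to $i_*i^*A$ (not to $i_*B$, which is a $B$-$A$-bimodule and cannot lie in $K^b(\proj A^e)$); it sends $B^e=B\otimes_kB$ to $M\otimes_kM^{\vee}$, hence preserves compactness if and only if \emph{both} $(M^{\vee})_A=(i_*B)_A$ \emph{and} ${}_AM={}_A(i^*A)$ are compact. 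The first condition is what ``$i_*$ preserves compactness'' gives; the second is an independent condition on the \emph{left} $A$-module structure of $i^*A$ (the two bimodules are dual over $B$, not over $A$, so unlike in part (1) one condition does not imply the other). This left-handed compactness is precisely what the paper extracts from the fact that the \emph{middle} recollement of a $3$-recollement extends one step upwards (its ``${}_AY$ is compact'', via \cite[Proposition 3.2 and Lemma 2.8]{AKLY13}), and it is the only place where $n\geq 3$ enters. As written, your argument uses only that $i_*$ and $j_!$ preserve compactness, which already holds for a $2$-recollement, so it would prove that a $2$-recollement suffices --- contradicting Example~\ref{Example-Kronecker}: for the infinite Kronecker algebra $i_*$ preserves compactness and $B=C=k$ is homologically smooth, yet $i_*i^*A=A/Ae_2A$ is not compact in $\mathcal{D}(A^e)$, exactly because ${}_A(i^*A)\cong{}_A(A/Ae_2A)$ is not compact over $A^{\op}$. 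To close the gap you must pass to the middle recollement of the $3$-recollement and use its upward extension to obtain ${}_A(i^*A)\in K^b(\proj A^{\op})$ before concluding that the triangle $j_!j^!A\to A\to i_*i^*A\to$ exhibits $A$ as an extension of compact objects of $\mathcal{D}(A^e)$.
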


\begin{proof}
(1) See \cite[Proposition 3.10 (c)]{Kel11}.

(2) If $A$ is homologically smooth then $B$ is also
homologically smooth by (1). Since $n=2$, we have a
recollement of $\mathcal{D}A$ relative to $\mathcal{D}C$ and
$\mathcal{D}B$, and thus $C$ is also homologically smooth by (1)
again.

(3) Assume $\mathcal{D}A$ admits a 3-recollement relative to
$\mathcal{D}B$ and $\mathcal{D}C$. Consider the recollement formed
by the middle three layers. By \cite[Proposition 3]{Han14}, we may
assume that it is of the form
$$\xymatrix@!=6pc{ \mathcal{D}C \ar[r]^{i_* = - \otimes^L_C Y^\star}
& \mathcal{D}A \ar@<-3ex>[l]_{i^* = -\otimes^L_A Y}
\ar@<+3ex>[l]_{i^!} \ar[r]^{j^! = -\otimes _A^L X^*} & \mathcal{D}B
\ar@<-3ex>[l]_{j_! = -\otimes^L_B X} \ar@<+3ex>[l]_{j_*} } \eqno
{\rm (R')}$$ where $X \in \mathcal{D}(B^{\op} \otimes A)$, $Y \in
\mathcal{D}(A^{\op} \otimes C)$, $X^* := \RHom_A(X,A)$ and $Y^\star
:= \RHom_C(Y,C)$. Clearly, $_AY$ and $Y^\star_A$ are compact since
the recollement can be extended one step upwards and one step
downwards respectively \cite[Proposition 3.2 and Lemma 2.8]{AKLY13}.

By \cite[Theorem 1]{Han14} and \cite[Theorem 2]{Han14}, a
recollement of derived categories of algebras induces those of
tensor product algebras and opposite algebras respectively. Thus we
have the following three recollements induced by the recollement
${\rm (R')}$:
$$\xymatrix @R=0.6in @C=0.8in{\mathcal{D}(C^e) \ar[d]|{F_1} & &
\mathcal{D}(C^{\op} \otimes_k B) \ar[d] \\
\mathcal{D}(A^{\op} \otimes_k C) \ar@<+3ex>[u] \ar@<-3ex>[u]|{L_1}
\ar[d] \ar[r]|{F_2} & \mathcal{D}(A^e) \ar@<+3ex>[l]
\ar@<-3ex>[l]|{L_2} \ar[r]|{F_3} & \mathcal{D}(A^{\op} \otimes_k B)
\ar@<+3ex>[u] \ar@<-3ex>[u] \ar@<+3ex>[l] \ar@<-3ex>[l]|{L_3} \ar[d]|{F_4} \\
\mathcal{D}(B^{\op} \otimes_k C) \ar@<+3ex>[u] \ar@<-3ex>[u] & &
\mathcal{D}(B^e) \ar@<+3ex>[u] \ar@<-3ex>[u]|{L_4} }$$ where $L_1 =
Y^\star \otimes_A^L -$, $F_1 = Y \otimes_C^L -$, $L_2 = -
\otimes_A^L Y$, $F_2 = - \otimes_C^L Y^\star$, $L_3 = - \otimes_B^L
X$, $F_3 = - \otimes_A^L X^*$, $L_4 = X^*\otimes_B^L -$ and $F_4 = X
\otimes_A^L -$. Consider the canonical triangle
$$L_3F_3A \longrightarrow A \longrightarrow F_2L_2A \longrightarrow \; \mbox{in} \; D(A^e),$$
and note that $F_2L_2A = Y \otimes_C^L Y^\star = F_2F_1C$, $L_3F_3A
= X^* \otimes_B^L X = L_3L_4B$. Clearly, the functors $L_3$ and
$L_4$ preserve compactness, so are $F_1$ and $F_2$ since $_AY$ and
$Y^\star_A$ are compact. Applying these to the above triangle, we
get $A \in K^b(\proj A^e)$ whenever $B \in K^b(\proj B^e)$ and $C
\in K^b(\proj C^e)$. Namely, the homological smoothness of $B$ and
$C$ implies that of $A$.
\end{proof}

According to Example~\ref{Example-Kronecker} and the statement
followed, we see that in Theorem~\ref{Theorem-homologically smooth}
(3), the requirement $n \geq 3$ is optimal.

Applying Theorem~\ref{Theorem-homologically smooth} to triangular
matrix algebras, we get the following corollary which provides a
construction of homologically smooth algebras.

\begin{corollary}
Let $B$ and $C$ be algebras, $M$ a $C$-$B$-bimodule, and $A :=
\left[\begin{array}{cc} B & 0 \\ M & C  \end{array}\right] $.

{\rm(1)} If $A$ is homologically smooth, then so are $B$ and $C$;

{\rm(2)} If $B$ and $C$ are homologically smooth and $_CM \in
K^b(\proj C^{\op})$ or $M_B \in K^b(\proj B)$, then $A$ is also
homologically smooth.
\end{corollary}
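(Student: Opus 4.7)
The plan is to combine Example~\ref{Example-n-recollement} (2), which records precisely when a triangular matrix algebra yields an $n$-recollement of the relevant derived categories, with Theorem~\ref{Theorem-homologically smooth} (Theorem II), which transfers homological smoothness along $n$-recollements. No fresh constructions are needed; both parts reduce to bookkeeping about how many layers the relevant recollement has.

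For part (1), I would invoke Example~\ref{Example-n-recollement} (2): without any extra hypothesis on $M$, $\mathcal{D}A$ always admits a $2$-recollement relative to $\mathcal{D}B$ and $\mathcal{D}C$. Then Theorem~\ref{Theorem-homologically smooth} (2) yields at once that the homological smoothness of $A$ descends to both $B$ and $C$.

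For part (2), the hypothesis on $M$ upgrades the $2$-recollement to a $3$-recollement. Specifically, if $_CM \in K^b(\proj C^{\op})$, Example~\ref{Example-n-recollement} (2) produces a $3$-recollement of $\mathcal{D}A$ relative to $\mathcal{D}C$ and $\mathcal{D}B$, while if $M_B \in K^b(\proj B)$ it produces a $3$-recollement relative to $\mathcal{D}B$ and $\mathcal{D}C$. In either scenario, applying the nontrivial (``if'') direction of Theorem~\ref{Theorem-homologically smooth} (3) gives that homological smoothness of the two outer algebras implies homological smoothness of $A$.

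Since the whole argument is a direct splicing of two results already established in the paper, there is no genuine obstacle; the only point worth double-checking is that Example~\ref{Example-n-recollement} (2) as stated does not require $A$, $B$, $C$ to be finite dimensional, so Theorem~\ref{Theorem-homologically smooth} applies in the generality claimed by the corollary. Note also that Example~\ref{Example-Kronecker} shows why an extra hypothesis on $M$ is needed for part (2): without such an assumption only a $2$-recollement is available, and Theorem~\ref{Theorem-homologically smooth} (3) cannot be invoked.
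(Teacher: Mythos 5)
Your proposal is correct and matches the paper's own proof, which likewise just cites Example~\ref{Example-n-recollement}~(2) together with Theorem~\ref{Theorem-homologically smooth}: the unconditional $2$-recollement gives part (1) via Theorem II(2), and the hypothesis on $M$ upgrades it to a $3$-recollement so that Theorem II(3) gives part (2). Your remarks on the direction of the $3$-recollement and on the fact that no finite-dimensionality is needed are accurate refinements of the same argument.
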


\begin{proof}
It follows from Example~\ref{Example-n-recollement} (2) and
Theorem~\ref{Theorem-homologically smooth}.
\end{proof}

\section{\large $n$-recollements and Gorensteinness}

\indent\indent In this section, we will observe the relations
between $n$-recollements and the Gorensteinness of algebras, and
reduce the Gorenstein symmetry conjecture to 2-derived-simple
algebras.

A finite dimensional algebra $A$ is said to be {\it Gorenstein} if
$\id_A A < \infty$ and $\id_{A^\op}A < \infty$. Clearly, a finite
dimensional algebra $A$ is Gorenstein if and only if $K^b(\proj A) =
K^b(\inj A)$ as strictly full triangulated subcategories of
$\mathcal{D}A$. Thus, the Gorensteinness of algebras is invariant
under derived equivalences. It is natural to consider the relation
between recollements and the Gorensteinness of algebras. In
\cite{Pan13}, Pan proved that the Gorensteinness of $A$ implies the
Gorensteinness of $B$ and $C$ if there exists a recollement of
$\mathcal{D}^b(\mod A)$ relative to $\mathcal{D}^b(\mod B)$ and
$\mathcal{D}^b(\mod C)$. Now we complete it using the language of
$n$-recollements. The following theorem is just Theorem III.

\begin{theorem}\label{Theorem-Gorenstein}
Let $A$, $B$ and $C$ be finite dimensional algebras, and
$\mathcal{D}A$ admit an $n$-recollement relative to $\mathcal{D}B$
and $\mathcal{D}C$.

{\rm (1)} $n=3$: if $A$ is Gorenstein then so are $B$ and $C$;

{\rm (2)} $n \geq 4$: $A$ is Gorenstein if and only if so are $B$ and
$C$.
\end{theorem}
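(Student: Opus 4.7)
The plan is to apply the standard characterization that a finite-dimensional algebra $X$ is Gorenstein if and only if $K^b(\proj X)$ and $K^b(\inj X)$ agree as strictly full triangulated subcategories of $\mathcal{D}X$. The proof then reduces to transporting this identity along suitable functors extracted from the $n$-recollement.

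For part (1) ($n=3$): By Proposition~\ref{Proposition-3-recollement}, the 3-recollement yields two recollements of $\mathcal{D}A$ relative to $\mathcal{D}B$ and $\mathcal{D}C$, say $R_P$ and $R_I$, with all six functors in the former restricting to $K^b(\proj)$ and those in the latter restricting to $K^b(\inj)$. Assuming $A$ is Gorenstein: for $P \in K^b(\proj B)$, the object $i_*P$ computed in $R_P$ lies in $K^b(\proj A) = K^b(\inj A)$, and applying $i^!$, which always restricts to $K^b(\inj)$ by Lemma~\ref{Lemma-restrict}, together with $i^!i_* = $ identity gives $P \in K^b(\inj B)$. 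Dually, for $I \in K^b(\inj B)$, the object $i_*I$ computed in $R_I$ lies in $K^b(\inj A) = K^b(\proj A)$, so $I = i^*i_*I \in K^b(\proj B)$. Hence $B$ is Gorenstein, and the treatment of $C$ is parallel, using $j_!, j^!, j_*$ in place of $i_*, i^*, i^!$.

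For part (2) ($n \geq 4$), the forward direction follows from (1). For the converse, assume $B$ and $C$ are Gorenstein. For $P \in K^b(\proj A)$, fix a suitable recollement inside the $n$-recollement and consider the canonical triangle $j_!j^!P \to P \to i_*i^*P \to$. Since $i^*$ and $j^!$ restrict to $K^b(\proj)$ (available from a 2-recollement by Proposition~\ref{Proposition-2-recollement}), we have $i^*P \in K^b(\inj B)$ and $j^!P \in K^b(\inj C)$ by the Gorenstein hypothesis on $B$ and $C$. Closure of $K^b(\inj A)$ under extensions then gives $P \in K^b(\inj A)$, provided that $i_*$ and $j_!$ also restrict to $K^b(\inj)$ in this recollement. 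The symmetric triangle $i_*i^!I \to I \to j_*j^!I \to$ for $I \in K^b(\inj A)$ yields $I \in K^b(\proj A)$ analogously, provided that $i_*$ and $j_*$ restrict to $K^b(\proj)$.

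The crux is to secure these auxiliary restrictions. By iterating Lemmas~\ref{Lemma-downwards} and~\ref{Lemma-upwards} together with Lemma~\ref{Lemma-adjoint}, $j_!$ restricts to $K^b(\inj)$ when the chosen BBD recollement is extended at least twice upwards, and $j_*$ restricts to $K^b(\proj)$ when it is extended at least twice downwards; these two demands cannot simultaneously be met by a single BBD recollement relative to $(B,C)$ inside a 4-recollement, which is the main technical obstacle. The resolution is to apply the two triangles in two distinct BBD recollements drawn from the $n$-recollement---one with at least two upwards extensions for the first triangle, and one with at least two downwards extensions for the second---possibly reorienting the latter as a recollement relative to $(C,B)$ via the induced $(n-1)$-recollement in the opposite orientation. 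For $n \geq 4$ both configurations can always be located inside the alternating tower, and combining the two triangles establishes $K^b(\proj A) = K^b(\inj A)$, so that $A$ is Gorenstein.
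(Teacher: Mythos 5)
Your overall strategy is the paper's: characterize Gorensteinness by $K^b(\proj)=K^b(\inj)$ and transport this identity along functors whose restriction behaviour is controlled by Lemmas~\ref{Lemma-restrict}--\ref{Lemma-upwards}, using the gluing triangles for the converse in (2). Part (2) is correct: the paper gets away with a \emph{single} sub-recollement (the one whose first two layers restrict to both $K^b(\proj)$ and $K^b(\inj)$) by applying the one triangle $j_!j^!X\to X\to i_*i^*X$ to both $X=A$ and $X=DA$, whereas you use both triangles and therefore genuinely need two differently positioned sub-recollements; your analysis of why a single one cannot serve and how to choose the two is sound, just more elaborate than necessary.

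The one step that would fail as written is the treatment of $C$ in part (1). For $B$ your argument uses the embedding $i_*$ (which needs the extra restriction supplied by $R_P$, resp.\ $R_I$) followed by the retractions $i^!$ and $i^*$, which restrict to $K^b(\inj)$ and $K^b(\proj)$ \emph{unconditionally}. The literal substitution $i_*\mapsto j_!$, $i^*\mapsto j^!$, $i^!\mapsto j_*$ does not produce a valid argument: $j_*$ points the wrong way, and the only retraction $\mathcal{D}A\to\mathcal{D}C$ is $j^!$, which in $R_P$ is only guaranteed to restrict to $K^b(\proj)$ and in $R_I$ only to $K^b(\inj)$. The asymmetry of the recollement forces you to pair the two recollements with the two inclusions in the \emph{opposite} way from the $B$-case: use $R_I$ (where $j^!$ restricts to $K^b(\inj)$) together with $j_!P\in K^b(\proj A)=K^b(\inj A)$ to get $K^b(\proj C)\subseteq K^b(\inj C)$, and use $R_P$ (where $j^!$ restricts to $K^b(\proj)$) together with $j_*I\in K^b(\inj A)=K^b(\proj A)$ to get $K^b(\inj C)\subseteq K^b(\proj C)$. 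With that correction your part (1) goes through; the paper sidesteps the issue by working in the middle recollement of the $3$-recollement (relative to $\mathcal{D}C$ and $\mathcal{D}B$), whose middle layer $i_*,j^!$ restricts to both $K^b(\proj)$ and $K^b(\inj)$ simultaneously, and applying it to the specific objects $C$, $DC$, $B$, $DB$.
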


\begin{proof}
(1) It follows from Proposition~\ref{Proposition-3-recollement} that
$\mathcal{D}^b(\mod A)$ admits a recollement relative to
$\mathcal{D}^b(\mod C)$ and $\mathcal{D}^b(\mod B)$. Therefore, the
statement follows from Pan \cite{Pan13}. Here we provide another
proof. Consider the following recollement consisting of the middle
three layers of functors of the 3-recollement:
$$\xymatrix@!=6pc{ \mathcal{D}(C) \ar[r]^{i_*} &
\mathcal{D}(A) \ar@<-3ex>[l]_{i^*} \ar@<+3ex>[l]_{i^!} \ar[r]^{j^!}
& \mathcal{D}(B) \ar@<-3ex>[l]_{j_!} \ar@<+3ex>[l]_{j_*}}.$$ By
Lemma~\ref{Lemma-restrict}, $i^*, i_*, j_!$ and $j^!$ restrict to
$K^b(\proj)$, and $i_*, i^!, j^!$ and $j_*$ restrict to $K^b(\inj)$.
If $A$ is Gorenstein then $K^b(\proj A)= K^b(\inj A)$.

Note that $DC := \Hom _k(C,k)$. Thus $DC \cong i^*i_*(DC) \in
i^*i_*(K^b(\inj C)) \subseteq i^*(K^b(\inj A)) = i^*(K^b(\proj A))
\subseteq  K^b(\proj C).$ Thus, $\pd_C (DC) < \infty$, equivalently,
$\id_{C^\op}C < \infty$. On the other hand, $C \cong i^!i_*C \in
i^!i_*K^b(\proj C) \subseteq i^!K^b(\proj A) = i^!K^b(\inj A)
\subseteq K^b(\inj C)$. Thus, $\id_C C < \infty$. Therefore, $C$ is
Gorenstein.

Similarly, $DB \cong j^!j_*(DB) \in j^!j_*(K^b(\inj B)) \subseteq
j^!(K^b(\inj A)) = \linebreak j^!(K^b(\proj A)) \subseteq  K^b(\proj
B).$ Thus, $\pd_B (DB) < \infty$, equivalently, $\id_{B^\op}B <
\infty$. On the other hand, $B \cong j^!j_!B \in j^!j_!K^b(\proj B)
\subseteq j^!K^b(\proj A) = j^!K^b(\inj A) \subseteq K^b(\inj B)$.
Thus, $\id_B B < \infty$. Therefore, $B$ is Gorenstein.

\medskip

(2) Let $$\xymatrix@!=6pc{ \mathcal{D}B \ar@<2.4ex>[r] \ar@<-4ex>[r]
\ar@<-0.8ex>[r]|{i_*} & \mathcal{D}A \ar@<-4ex>[l]
\ar@<-0.8ex>[l]|{i^*} \ar@<+2.4ex>[l] \ar@<-0.8ex>[r]|{j^!}
\ar@<2.4ex>[r] \ar@<-4ex>[r] & \mathcal{D}C \ar@<-4ex>[l]
\ar@<-0.8ex>[l]|{j_!} \ar@<2.4ex>[l]}$$ be a 4-recollement. By
Lemma~\ref{Lemma-restrict}, $i^*$, $j_!$, $i_*$ and $j^!$ restrict
to both $K^b(\proj)$ and $K^b(\inj)$. If both $B$ and $C$ are
Gorenstein, then $K^b(\proj B) = K^b(\inj B)$ and $K^b(\proj C)=
K^b(\inj C)$.

Consider the triangle $j_!j^!(DA) \rightarrow DA \rightarrow
i_*i^*(DA) \rightarrow .$ We have $j_!j^!(DA) \linebreak \in
j_!j^!K^b(\inj A) \subseteq j_!K^b(\inj C) = j_!K^b(\proj C)
\subseteq K^b(\proj A)$ and $i_*i^*(DA) \in i_*i^*K^b(\inj A)
\subseteq i_*K^b(\inj B) = i_*K^b(\proj B) \subseteq K^b(\proj A)$.
Thus, $DA \in K^b(\proj A)$, i.e., $\pd_A(DA)<\infty$. Hence,
$\id_{A^\op}A < \infty$.

Similarly, consider the triangle $j_!j^!A \rightarrow A \rightarrow
i_*i^*A \rightarrow .$ We have $j_!j^!A \in j_!j^!K^b(\proj A)
\subseteq j_!K^b(\proj C) = j_!K^b(\inj C) \subseteq K^b(\inj A)$
and $i_*i^*A \linebreak \in i_*i^*K^b(\proj A) \subseteq
i_*K^b(\proj B) = i_*K^b(\inj B) \subseteq K^b(\inj A)$. Thus, $A
\in K^b(\inj A)$, i.e., $\id_A A < \infty$. Therefore, $A$ is
Gorenstein.
\end{proof}

Applying Theorem~\ref{Theorem-Gorenstein} to triangular matrix
algebras, we get the following corollaries, which imply the
condition $n \geq 4$ in Theorem~\ref{Theorem-Gorenstein} (2) is
optimal.

\begin{corollary}{\rm (\cite[Theorem 3.3]{Chen09})}
Let $B$ and $C$ be Gorenstein algebras, $M$ a finite generated
$C$-$B$-bimodule, and $A = \left[\begin{array}{cc} B & 0 \\ M & C
\end{array}\right] $. Then $A$ is Gorenstein if and only if $\pd_{C^\op}M
< \infty$ and $\pd_B M < \infty$.
\end{corollary}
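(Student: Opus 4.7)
The plan is to treat the two directions separately, using Example~\ref{Example-n-recollement}~(2) to bridge between the projective-dimension conditions on $M$ and the $n$-recollement structure on $\mathcal{D}A$, and then invoking Theorem~\ref{Theorem-Gorenstein} for the Gorensteinness conclusion.

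For ``$\Leftarrow$'', the plan is essentially immediate. Since $M$ is finitely generated as a $C$-$B$-bimodule, the hypotheses $\pd_{C^\op} M < \infty$ and $\pd_B M < \infty$ are equivalent to $_C M \in K^b(\proj C^\op)$ and $M_B \in K^b(\proj B)$, so Example~\ref{Example-n-recollement}~(2) supplies a $4$-recollement of $\mathcal{D}A$ relative to $\mathcal{D}C$ and $\mathcal{D}B$. Combined with the Gorensteinness of $B$ and $C$, Theorem~\ref{Theorem-Gorenstein}~(2) immediately yields that $A$ is Gorenstein.

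For ``$\Rightarrow$'', I would start from the $2$-recollement of $\mathcal{D}A$ relative to $\mathcal{D}B$ and $\mathcal{D}C$ that Example~\ref{Example-n-recollement}~(2) always provides, and upgrade it step by step. By Proposition~\ref{Proposition-2-recollement} and Lemma~\ref{Lemma-restrict}, its middle recollement $\mathcal{D}B \to \mathcal{D}A \to \mathcal{D}C$ has $i^*, i_*, j_!, j^!$ restricting to $K^b(\proj)$, while $i^!, j_*$ restrict to $K^b(\inj)$. Since $A, B, C$ are all Gorenstein, $K^b(\proj X) = K^b(\inj X)$ for $X \in \{A, B, C\}$, which promotes $i^!$ and $j_*$ to restrict to $K^b(\proj)$ as well, so Proposition~\ref{Proposition-3-recollement} upgrades the data to a $3$-recollement. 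By the remark after the definition of $n$-recollements, this $3$-recollement contains a $2$-recollement of $\mathcal{D}A$ relative to $\mathcal{D}C$ and $\mathcal{D}B$ in the reversed direction, and Proposition~\ref{Proposition-2-recollement} applied to this reversed $2$-recollement forces the embedding $\mathcal{D}C \to \mathcal{D}A$ (restriction of scalars along $A \to C$, sending $C$ to $A/Ae_1A$) to land in $K^b(\proj A)$; that is, $\pd_A(A/Ae_1 A) < \infty$. Feeding this into the canonical short exact sequence $0 \to i_* M \to e_2 A \to A/Ae_1 A \to 0$ of right $A$-modules, together with the projectivity of $e_2 A$, yields $\pd_A(i_* M) < \infty$. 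The standard change-of-rings isomorphism $\Ext_A^*(i_* M, i_* N) \cong \Ext_B^*(M, N)$ for $N \in \Mod B$ (relying on exactness of $i_*$ and on its right adjoint $i^!$ preserving injectives) then delivers $\pd_B M < \infty$. Running the dual argument on the opposite algebra $A^\op$ produces $\pd_{C^\op} M < \infty$.

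The main obstacle I anticipate is the necessity direction, specifically two conceptual steps: upgrading the always-present $2$-recollement to a $3$-recollement via the Gorenstein hypothesis, which crucially uses the identity $K^b(\proj) = K^b(\inj)$ to promote the restriction properties of $i^!$ and $j_*$; and translating the abstract compactness $A/Ae_1 A \in K^b(\proj A)$ back to the concrete conclusion $\pd_B M < \infty$ through the canonical short exact sequence and the change-of-rings identity. The sufficiency direction is, by contrast, essentially a plug-and-play application of Example~\ref{Example-n-recollement}~(2) and Theorem~\ref{Theorem-Gorenstein}~(2).
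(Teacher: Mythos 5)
Your proof is correct, and the sufficiency direction is identical to the paper's (pass to the $4$-recollement of Example~\ref{Example-n-recollement}~(2) and apply Theorem~\ref{Theorem-Gorenstein}~(2)); but your necessity direction takes a genuinely different, and longer, route. The paper works directly with the explicit $2$-recollement in which $i^!=-\otimes^L_A Ae_1$ and $j_!=-\otimes^L_C e_2A$: since $i^!$ restricts to $K^b(\inj)$ and $K^b(\inj)=K^b(\proj)$ over the Gorenstein algebras $A$ and $B$, it gets $i^!A=Ae_1=B\oplus M_B\in K^b(\proj B)$ in one line; for the left-module condition it notes that $j^!$ restricts to $K^b(\inj)$, hence by Lemma~\ref{Lemma-adjoint} its left adjoint $j_!$ restricts to $\mathcal{D}^b(\mod)$, which is equivalent to $_C(e_2A)=C\oplus{}_CM\in K^b(\proj C^{\op})$. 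You instead establish $j_*C\cong A/Ae_1A\in K^b(\proj A)$, feed it into $0\to i_*M\to e_2A\to A/Ae_1A\to 0$, descend to $\pd_B M<\infty$ via full faithfulness of $i_*$, and dualize to $A^{\op}$ for the other half; each of these steps is sound. Two points would tighten it. First, the detour through upgrading to a $3$-recollement and reversing it is unnecessary: $j_*$ restricts to $K^b(\inj)$ by Lemma~\ref{Lemma-restrict}~(3), hence to $K^b(\proj)$ by the Gorensteinness of $A$ and $C$, which is all you actually use. Second, the isomorphism $\Ext^*_A(i_*M,i_*N)\cong\Ext^*_B(M,N)$ is not a generic change-of-rings fact (it fails for arbitrary surjections $A\to A/I$); here it holds because $Ae_2A$ is a stratifying ideal, i.e.\ because $i_*$ is fully faithful at the level of derived categories (equivalently, because inflations of projective $B$-modules along $A\to B\cong e_1A$ are projective $A$-modules), and that is the justification you should give rather than ``exactness of $i_*$ and $i^!$ preserving injectives.'' The paper's argument is shorter because it evaluates the third-layer functors on the regular modules $A$ and $e_2A$ themselves instead of on $A/Ae_1A$.
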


\begin{proof} Assume that $A$ is Gorenstein.
Set $e_1 = \left[\begin{array}{cc} 1 & 0 \\ 0 & 0
\end{array}\right]$ and $e_2 = \left[\begin{array}{cc} 0 & 0 \\ 0 & 1
\end{array}\right]$. By \cite[Example 3.4]{AKLY13}, there is a
2-recollement of the form
$$\xymatrix@!=8pc{ \mathcal{D}B \ar@<+1.5ex>[r]|{i_* = -\otimes ^L_Be_1A}
\ar@<-4.5ex>[r] & \mathcal{D}A \ar@<+1.5ex>[r]|{j^!} \ar@<-4.5ex>[r]
\ar@<-4.5ex>[l]|{i^*} \ar@<+1.5ex>[l]|{i^!=-\otimes ^L_AAe_1} &
\mathcal{D}C\ar@<-4.5ex>[l]|{j_!=-\otimes ^L_Ce_2A}
\ar@<+1.5ex>[l]|{j_*}}.$$ It follows from Lemma~\ref{Lemma-restrict}
that $i^!$ restricts to $K^b(\inj)$, and further restricts to
$K^b(\proj)$ by the Gorensteinness of $A$ and $B$. Thus, $i^!A = B
\oplus M_B \in K^b(\proj B)$. Hence, $\pd_B M < \infty$. Similarly,
it follows from Lemma~\ref{Lemma-restrict} that $j^!$ restricts to
$K^b(\proj)$, and further restricts to $K^b(\inj)$ by the
Gorensteinness of $A$ and $C$. By Lemma~\ref{Lemma-adjoint}, $j_!$
restricts to $\mathcal{D}^b(\mod)$. Since $j_! = - \otimes _C ^L
e_2A$, this is equivalent to $_C(e_2A) \in K^b(\proj C^{\op})$ (Ref.
\cite[Lemma 2.8]{AKLY13}). Note that $_C(e_2A) = C \oplus {_CM}$,
thus $\pd_{C^\op}M < \infty$.

Conversely, assume that $\pd_{C^\op}M < \infty$ and $\pd_B M <
\infty$, then by Example~\ref{Example-n-recollement} (2), the above
2-recollement can be extended one step upwards and one step
downwards to a 4-recollement. Therefore, the Gorensteinness of $B$
and $C$ implies the Gorensteinness of $A$ by
Theorem~\ref{Theorem-Gorenstein}.
\end{proof}

\begin{corollary}{\rm (\cite[Theorem 2.2 (iii)]{XZ12})}
Let $B$ and $C$ be finite dimensional algebras, $M$ a finite
generated $C$-$B$-bimodule with $\pd_{C^\op}M < \infty$ and $\pd_B M
<
\infty$, and $A = \left[\begin{array}{cc} B & 0 \\
M & C \end{array}\right]$. Then $A$ is Gorenstein if and only if so
are $B$ and $C$.
\end{corollary}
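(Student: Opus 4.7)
The plan is to reduce this corollary directly to Theorem~\ref{Theorem-Gorenstein}(2) by exhibiting a suitable $4$-recollement of $\mathcal{D}A$, so that essentially no new work beyond the preceding results is needed.

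First, I would translate the homological hypotheses into the language of perfect complexes. Since $M$ is finitely generated on both sides and $B$, $C$ are finite dimensional, the condition $\pd_{B}M<\infty$ is equivalent to $M_B\in K^b(\proj B)$, and similarly $\pd_{C^{\op}}M<\infty$ is equivalent to ${}_{C}M\in K^b(\proj C^{\op})$. These are precisely the two conditions that appear in Example~\ref{Example-n-recollement}(2) to upgrade the canonical $2$-recollement of $\mathcal{D}A$ (relative to $\mathcal{D}B$ and $\mathcal{D}C$) by simultaneously adding one layer upwards and one layer downwards.

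Consequently, under the hypotheses of the corollary, Example~\ref{Example-n-recollement}(2) directly supplies a $4$-recollement of $\mathcal{D}A$ relative to $\mathcal{D}C$ and $\mathcal{D}B$. Applying Theorem~\ref{Theorem-Gorenstein}(2) to this $4$-recollement immediately yields the desired equivalence: $A$ is Gorenstein if and only if both $B$ and $C$ are Gorenstein.

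The argument has no genuine obstacle; the only point worth verifying carefully is the initial translation between finite projective dimension of the finitely generated bimodule $M$ and its membership in $K^b(\proj)$ on each side. Once this is in place, the conclusion is a one-line consequence of the two preceding results, which is precisely what makes the language of $n$-recollements so well suited to the Gorensteinness of triangular matrix algebras.
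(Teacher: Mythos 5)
Your proposal is correct and follows exactly the same route as the paper, whose entire proof is the one-line citation of Example~\ref{Example-n-recollement}~(2) and Theorem~\ref{Theorem-Gorenstein}; you have merely spelled out the translation $\pd_B M<\infty \Leftrightarrow M_B\in K^b(\proj B)$ (and its left-hand analogue) that the paper leaves implicit. The resulting $4$-recollement is relative to $\mathcal{D}C$ and $\mathcal{D}B$ in that order, but since the conclusion is symmetric in $B$ and $C$ this is harmless.
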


\begin{proof}
It follows from Example~\ref{Example-n-recollement} (2) and
Theorem~\ref{Theorem-Gorenstein}.
\end{proof}

Next we study the Gorenstein symmetry conjecture.

\medskip

\noindent{\bf Gorenstein symmetry conjecture.} {\it Let $A$ be an
artin algebra. Then $\id_AA < \infty$ if and only if $\id_{A^\op} A
< \infty$.}

\medskip

This conjecture is listed in Auslander-Reiten-Smal{\o}'s book
\cite[p.410, Conjecture (13)]{ARS95}, and it closely connects with
other homological conjectures. For example, it is known that the
finitistic dimension conjecture implies the Gorenstein symmetry
conjecture. But so far all these conjectures are still open. As
mentioned before, the finitistic dimension conjecture can be reduced
to 2-derived-simple algebras. Now, let us utilize
Theorem~\ref{Theorem-Gorenstein} to reduce the Gorenstein symmetry
conjecture to 2-derived-simple algebras.

\begin{proposition} \label{Proposition-GorSymConj}
Let $A$, $B$ and $C$ be finite dimensional algebras, and
$\mathcal{D}A$ admit a $2$-recollement relative to $\mathcal{D}B$
and $\mathcal{D}C$. If both $B$ and $C$ satisfy the Gorenstein symmetry
conjecture, then so does $A$.
\end{proposition}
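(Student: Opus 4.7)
The plan is to establish the forward implication $\id_A A_A < \infty \Rightarrow \id_A {}_A A < \infty$ of the Gorenstein symmetry conjecture for $A$; the reverse then follows by applying the same argument to $A^\op$, whose derived category carries a $2$-recollement relative to $\mathcal{D}B^\op$ and $\mathcal{D}C^\op$ (induced by the given one as in \cite{Han14}), together with the fact that $B, C$ satisfy the Gorenstein symmetry conjecture if and only if $B^\op, C^\op$ do. Label the four layers of the $2$-recollement as $(i^*, i_*, i^!, i^\flat)$ on the $B$-side and $(j_!, j^!, j_*, j^\flat)$ on the $C$-side, with $(i^!, i^\flat)$ and $(j_*, j^\flat)$ the extra adjoint pairs supplied by the fourth layer. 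Combining Proposition~\ref{Proposition-2-recollement} and Lemma~\ref{Lemma-restrict}, applied first to the recollement formed by the top three layers (relative to $B, C$) and then to the recollement formed by the bottom three layers (relative to $C, B$), one obtains that $i^*, i_*, j_!, j^!$ restrict to $K^b(\proj)$, that $i^!, j_*, i^\flat, j^\flat$ restrict to $K^b(\inj)$, and that $j^\flat j_* = \id_{\mathcal{D}C}$ because $j_*$ is fully faithful.

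Assume $A \in K^b(\inj A)$, equivalently $K^b(\proj A) \subseteq K^b(\inj A)$. First I would recover that $B$ is Gorenstein: $i_* B \in K^b(\proj A) \subseteq K^b(\inj A)$ and $i^!$ restricts to $K^b(\inj)$, so $B = i^! i_* B \in K^b(\inj B)$; the Gorenstein symmetry conjecture for $B$ then upgrades this to $K^b(\proj B) = K^b(\inj B)$. To recover the same for $C$, I would apply the second triangle $i_* i^! A \to A \to j_* j^! A \to$: since $i^! A \in K^b(\inj B) = K^b(\proj B)$ and $i_*$ restricts to $K^b(\proj)$, $i_* i^! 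A \in K^b(\proj A) \subseteq K^b(\inj A)$, so the cone $j_* j^! A$ also lies in $K^b(\inj A)$; then $j^\flat j_* = \id$ and the $K^b(\inj)$-restriction of $j^\flat$ give $j^! A = j^\flat j_* j^! A \in K^b(\inj C)$. Since $\thick_{\mathcal{D}A}(A) = K^b(\proj A)$, $j^! i_* = 0$, and $j^! j_! = \id$, the image $j^!(A)$ satisfies $\thick_{\mathcal{D}C}(j^! A) = K^b(\proj C)$; hence $K^b(\proj C) \subseteq K^b(\inj C)$, $C \in K^b(\inj C)$, and the Gorenstein symmetry conjecture for $C$ makes $C$ Gorenstein.

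To conclude $\id_A {}_A A < \infty$, i.e., $DA \in K^b(\proj A)$, the same $j^\flat$-manipulation with $A$ replaced by $DA \in K^b(\inj A)$ yields, together with $C$ Gorenstein, that $j^! DA \in K^b(\inj C) = K^b(\proj C)$. To bring $j_* j^! DA$ back into $K^b(\proj A)$, I would apply the second triangle to $X = j_!(j^! DA)$: using $j^! j_! = \id$, it reads $i_* i^!(j_! j^! DA) \to j_! j^! DA \to j_* j^! DA \to$, and both of its first two terms lie in $K^b(\proj A)$. Indeed, $j_! j^! DA \in j_!(K^b(\proj C)) \subseteq K^b(\proj A)$, which by hypothesis is contained in $K^b(\inj A)$, forcing $i^!(j_! j^! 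DA) \in K^b(\inj B) = K^b(\proj B)$ and hence $i_* i^!(j_! j^! DA) \in K^b(\proj A)$. Therefore $j_* j^! DA \in K^b(\proj A)$; combined with $i_* i^! DA \in K^b(\proj A)$ (by the same $B$-side argument as in the previous paragraph), the triangle $i_* i^! DA \to DA \to j_* j^! DA \to$ delivers $DA \in K^b(\proj A)$, as required.

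The main obstacle is that in a $2$-recollement $j^!$ is not known to restrict to $K^b(\inj)$, so one cannot transport $K^b(\inj A)$ directly into $K^b(\inj C)$ along $j^!$. The workaround, which is precisely what the $2$-recollement hypothesis (as opposed to a plain recollement) affords, is to detour through $j_*$, which does restrict to $K^b(\inj)$, and then to pull back via the fourth-layer right adjoint $j^\flat$, whose $K^b(\inj)$-restriction comes from Lemma~\ref{Lemma-restrict} applied to the recollement formed by layers $2$ to $4$.
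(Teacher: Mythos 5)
Your proof is correct in substance but takes a genuinely different route from the paper's after the first step. Both arguments begin identically: $B\cong i^!i_*B\in K^b(\inj B)$ gives $\id_BB<\infty$, whence $B$ is Gorenstein by hypothesis. From there the paper does no further triangle chasing: it notes $i^!A\in K^b(\inj B)=K^b(\proj B)$, invokes Lemma~\ref{Lemma-downwards} to extend the $2$-recollement one step downwards, repeats the argument on the resulting $2$-recollement relative to $\mathcal{D}C$ and $\mathcal{D}B$ to get $C$ Gorenstein together with a second extension, and then applies Theorem~\ref{Theorem-Gorenstein}(2) to the resulting $4$-recollement; the reverse implication is handled dually inside the same recollement, starting from $DC\cong j^!j_*(DC)$ and extending upwards via Lemma~\ref{Lemma-upwards}. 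You instead stay inside the four given layers and prove $DA\in K^b(\proj A)$ by hand, using the unit/counit triangles, the fourth-layer right adjoint $j^\flat$ of $j_*$ (the isomorphism $j^\flat j_*\cong\id$ and the $K^b(\inj)$-restriction of $j^\flat$ are both correctly sourced from the lower constituent recollement), and the generation statement $\thick(j^!A)=K^b(\proj C)$; I checked all the containments and they hold, so in effect you re-prove the needed half of Theorem~\ref{Theorem-Gorenstein}(2) on the fly --- a self-contained argument that avoids the ladder-extension machinery at the price of a longer chain of inclusions. The one step you should shore up is the reduction of the reverse implication to $A^{\op}$: this requires that a $2$-recollement of $\mathcal{D}A$ induce a $2$-recollement of $\mathcal{D}(A^{\op})$ (relative to $\mathcal{D}(C^{\op})$ and $\mathcal{D}(B^{\op})$ --- note the outer terms swap), which is true but not part of this paper's toolkit; \cite[Theorem 2]{Han14} as quoted covers only single recollements, so you would need to check that the two induced recollements glue into a ladder of the same height, or, more economically, just dualize your own forward argument inside the given recollement, using $i^*$ and the first layer in place of $j^\flat$ and the fourth. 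Two cosmetic points: Proposition~\ref{Proposition-2-recollement} is not needed for your restriction facts (Lemma~\ref{Lemma-restrict} applied to the two constituent recollements suffices), and $j^!i_*=0$ plays no role in your $\thick$ computation, since $C\cong j^!j_!C$ with $j_!C\in\thick(A)$ already does the job.
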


\begin{proof}
Assume that
$$\xymatrix@!=5pc{ \mathcal{D}B \ar@<+1ex>[r]|{i_*} \ar@<-3ex>[r] & \mathcal{D}A
\ar@<+1ex>[r]|{j^!} \ar@<-3ex>[r] \ar@<-3ex>[l] \ar@<+1ex>[l]|{i^!}
& \mathcal{D}C \ar@<-3ex>[l] \ar@<+1ex>[l]|{j_*} } \eqno {\rm
(R'')}$$ is a 2-recollement, and both $B$ and $C$ satisfy the
Gorenstein symmetry conjecture.

If $\id_A A < \infty$, then $K^b(\proj A) \subseteq K^b(\inj A)$. By
Lemma~\ref{Lemma-restrict}, we have $B \cong i^!i_*B \in
i^!i_*(K^b(\proj B)) \subseteq i^!(K^b(\proj A)) \subseteq
i^!(K^b(\inj A)) \subseteq K^b(\inj B)$, i.e., $\id _B B < \infty$.
Since $B$ satisfies the Gorenstein symmetry conjecture, we obtain
that $B$ is Gorenstein. By Lemma~\ref{Lemma-restrict} again, we have
$i^!A \in i^!(K^b(\proj A))\subseteq i^!(K^b(\inj A)) \subseteq
K^b(\inj B)=K^b(\proj B)$. Due to Lemma~\ref{Lemma-downwards}, $i^!A
\in K^b(\proj B)$ implies that the 2-recollement ${\rm (R'')}$ can
be extended one step downwards. Therefore, we get a 2-recollement of
$\mathcal{D}A$ relative to $\mathcal{D}C$ and $\mathcal{D}B$.
Analogous to the above proof, we obtain that $C$ is Gorenstein and
the 2-recollement ${\rm (R'')}$ can be extended two steps downwards
to a 4-recollement of $\mathcal{D}A$ relative to $\mathcal{D}B$ and
$\mathcal{D}C$. By Theorem~\ref{Theorem-Gorenstein}, $A$ is
Gorenstein. Thus $\id_{A^\op}A < \infty$.

If $\id_{A^\op} A < \infty$, then $K^b(\inj A) \subseteq K^b(\proj
A)$. By Lemma~\ref{Lemma-restrict}, we have $DC \cong j^!j_*(DC) \in
j^!j_*(K^b(\inj C)) \subseteq j^!(K^b(\inj A)) \subseteq
j^!(K^b(\proj A)) \subseteq K^b(\proj C)$, i.e., $\id _{C^\op} C <
\infty$. Since $C$ satisfies the Gorenstein symmetry conjecture, we
obtain that $C$ is Gorenstein. By Lemma~\ref{Lemma-restrict} again,
we have $j^!(DA) \in j^!(K^b(\inj A)) \subseteq j^!(K^b(\proj A))
\subseteq K^b(\proj C) = K^b(\inj C)$. Due to
Lemma~\ref{Lemma-upwards}, $j^!(DA) \in K^b(\inj C)$ implies that
the 2-recollement ${\rm (R'')}$ can be extended one step upwards.
Therefore, we get a 2-recollement of $\mathcal{D}A$ relative to
$\mathcal{D}C$ and $\mathcal{D}B$. Analogous to the above proof, we
obtain that $B$ is Gorenstein and the 2-recollement ${\rm (R'')}$
can be extended two steps upwards to a 4-recollement of
$\mathcal{D}A$ relative to $\mathcal{D}B$ and $\mathcal{D}C$. By
Theorem~\ref{Theorem-Gorenstein}, $A$ is Gorenstein. Thus $\id_AA <
\infty$.
\end{proof}

The following corollary implies that the Gorenstein symmetry
conjecture can be reduced to an arbitrary complete set of
representatives of the derived equivalence classes of finite
dimensional algebras.

\begin{corollary} Let $A$ and $B$ be derived equivalent finite
dimensional algebras. Then $A$ satisfies the Gorenstein symmetry
conjecture if and only if so does $B$. \end{corollary}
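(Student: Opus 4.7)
The plan is to deduce this corollary directly from Proposition~\ref{Proposition-GorSymConj}, exactly in parallel with the analogous reduction for the Cartan determinant conjecture. The key observation is that a derived equivalence is a special case of a (trivial) $2$-recollement, so once we have a reduction statement along $2$-recollements, derived invariance of the property ``satisfies the Gorenstein symmetry conjecture'' becomes automatic.

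More precisely, suppose $A$ and $B$ are derived equivalent. By Example~\ref{Example-n-recollement}~(4), this derived equivalence induces a trivial $n$-recollement for every $n \in \mathbb{Z}^+$; in particular, $\mathcal{D}A$ admits a trivial $2$-recollement relative to $\mathcal{D}B$ and $\mathcal{D}0$ (or, equally well, relative to $\mathcal{D}0$ and $\mathcal{D}B$). The zero algebra trivially satisfies the Gorenstein symmetry conjecture, so the hypothesis of Proposition~\ref{Proposition-GorSymConj} that \emph{both} outer algebras satisfy the conjecture reduces to the assumption that $B$ alone does.

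Therefore, assuming $B$ satisfies the Gorenstein symmetry conjecture, Proposition~\ref{Proposition-GorSymConj} applied to this trivial $2$-recollement yields that $A$ satisfies the Gorenstein symmetry conjecture. Since derived equivalence is symmetric, swapping the roles of $A$ and $B$ gives the converse. There is no real obstacle here: the only thing to check is that the trivial case ``one outer algebra is $0$'' is indeed permitted in Proposition~\ref{Proposition-GorSymConj}, which it is, since the zero algebra is Gorenstein and $\id_0 0 = \id_{0^{\op}}0 = -\infty$ vacuously, so nothing in the proof of the proposition is disturbed.
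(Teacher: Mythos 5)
Your proposal is correct and is exactly the paper's own argument: apply Proposition~\ref{Proposition-GorSymConj} to the trivial $2$-recollement furnished by Example~\ref{Example-n-recollement}~(4), noting that the zero outer term satisfies the conjecture vacuously and that derived equivalence is symmetric. Nothing further is needed.
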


\begin{proof} It is enough to apply
Proposition~\ref{Proposition-GorSymConj} to the trivial
2-recollements, see Example~\ref{Example-n-recollement} (4).
\end{proof}

Applying Proposition~\ref{Proposition-GorSymConj}, we can reduce the
Gorenstein symmetry conjecture to 2-derived-simple algebras.

\begin{corollary}
The Gorenstein symmetry conjecture holds for all finite dimensional algebras
if and only if it holds for all {\rm 2}-derived-simple algebras.
\end{corollary}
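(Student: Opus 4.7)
The plan is to mimic exactly the reduction argument given for the Cartan determinant conjecture in Corollary \ref{Corollary-Cartan determinant}, but now using 2-recollements in place of 1-recollements and invoking Proposition \ref{Proposition-GorSymConj} as the inductive step in place of Proposition \ref{Proposition-CarDetConj}.

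First, the ``only if'' direction is immediate, since every 2-derived-simple algebra is in particular a finite dimensional algebra. For the ``if'' direction, let $A$ be any finite dimensional algebra and appeal to \cite[Proposition 6.5]{AKLY13} in the $\mathcal{D}^-(\Mod)$-setting: $\mathcal{D}^-(\Mod A)$ admits a finite stratification along $\mathcal{D}^-(\Mod)$-recollements whose simple factors are the $\mathcal{D}^-(\Mod)$-derived simple algebras. By Proposition \ref{Proposition-2-recollement}, a $\mathcal{D}^-(\Mod)$-recollement of $\mathcal{D}^-(\Mod A)$ relative to $\mathcal{D}^-(\Mod B)$ and $\mathcal{D}^-(\Mod C)$ lifts to a $2$-recollement of $\mathcal{D}A$ relative to $\mathcal{D}B$ and $\mathcal{D}C$, and the $\mathcal{D}^-(\Mod)$-derived simple algebras coincide with the 2-derived-simple algebras. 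Thus the stratification provides a finite sequence of 2-recollements expressing $A$ in terms of 2-derived-simple building blocks.

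Now induct on the length of the stratification. In the base case, $A$ is 2-derived-simple and the Gorenstein symmetry conjecture holds for $A$ by hypothesis. For the inductive step, take a $2$-recollement of $\mathcal{D}A$ relative to $\mathcal{D}B$ and $\mathcal{D}C$ where $B$ and $C$ admit strictly shorter stratifications; by the inductive hypothesis, both $B$ and $C$ satisfy the Gorenstein symmetry conjecture, and then Proposition \ref{Proposition-GorSymConj} shows that $A$ does as well. This completes the reduction.

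I do not anticipate any substantial obstacle: the deep content has already been packaged into Proposition \ref{Proposition-GorSymConj} (where the delicate extension of a 2-recollement to a 4-recollement via Lemmas \ref{Lemma-downwards} and \ref{Lemma-upwards} was carried out) and into the stratification theorem from \cite{AKLY13}. The only mildly subtle point is to make sure that one uses the $\mathcal{D}^-(\Mod)$-version of \cite[Proposition 6.5]{AKLY13} rather than the $\mathcal{D}(\Mod)$-version, so that the factors are genuinely 2-derived-simple (equivalently $\mathcal{D}^-(\Mod)$-derived simple via Proposition \ref{Proposition-2-recollement}) rather than only 1-derived-simple; this is precisely what makes Proposition \ref{Proposition-GorSymConj} applicable at each step of the induction.
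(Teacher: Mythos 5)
Your proof is correct and follows essentially the same route as the paper: a finite stratification of $\mathcal{D}A$ along $2$-recollements with $2$-derived-simple factors (via \cite[Proposition 6.5]{AKLY13} together with Proposition~\ref{Proposition-2-recollement}), combined with Proposition~\ref{Proposition-GorSymConj} as the inductive step. The paper states this more tersely, but your added care about using the $\mathcal{D}^-(\Mod)$-version of the stratification so that the factors are genuinely $2$-derived-simple is exactly the right point to check.
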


\begin{proof} For any finite dimensional algebra $A$,
by \cite[Proposition 6.5]{AKLY13}, $\mathcal{D}A$ admits a finite
stratification of derived categories along 2-recollements with
2-derived-simple factors. Then the corollary follows from
Proposition~\ref{Proposition-GorSymConj}.
\end{proof}

\bigskip

\noindent {\footnotesize {\bf ACKNOWLEDGMENT.} The authors are very
grateful to Shiping Liu, Baolin Xiong and Dong Yang for many helpful
discussions and suggestions. They thank Peter J{\o}rgensen and
Manuel Saor\'{\i}n for their valuable comments and remarks. They are
sponsored by Project 11171325 NSFC.}

\footnotesize

\end{document}